\newcommand{\avoidwidow}{\par\pagebreak[3]\kern50pt \pagebreak[3]\kern-50pt}
\title{Derivations in Codifferential Categories}
\author{Richard Blute\thanks{Research supported in part by
NSERC.} \footnote{Department of Mathematics and Statistics, University of Ottawa, \textsf{
rblute@uottawa.ca}} \,\,\,\,\,\,\,\,\,\,\,\,\,\,Rory B. B. Lucyshyn-Wright$^*$\footnote{Department of Pure Mathematics and Mathematical Statistics, University of Cambridge, \textsf{rbbl2@dpmms.cam.ac.uk}}\,\,\,\,\,\,\,\,\,\,\,\,\,\,\,\,Keith O'Neill$^*$\footnote{Department of Mathematics and Statistics, University of Ottawa, \textsf{tonei018@uottawa.ca}}}
\newtheorem{thm}{Theorem}[section]
\newtheorem{prop}[thm]{Proposition}
\newtheorem{rem}[thm]{Remark}
\newtheorem{lem}[thm]{Lemma}
\newtheorem{defn}[thm]{Definition}
\newcommand{\rarr}{\rightarrow}
\newcommand{\cC}{\mbox{$\cal C$}}
\newcommand{\sS}{\mbox{$\sf S$}}
\newcommand{\ox}{\otimes}
\newcommand{\del}{\partial}
\newcommand{\SSS}{\textnormal{\textsf{S}}}
\def\pushright#1{{
    \parfillskip=0pt            
    \widowpenalty=10000         
    \displaywidowpenalty=10000  
    \finalhyphendemerits=0      
   %
    \leavevmode                 
    \unskip                     
    \nobreak                    
    \hfil                       
    \penalty50                  
    \hskip.2em                  
    \null                       
    \hfill                      
    {#1}                        
   %
    \par}}                      
 \def\qed{\pushright{$\Box$}\penalty-700 \smallskip}
\newenvironment{prf}[1]{\begin{trivlist} \item[{\bf ~Proof}#1.]}%
{\qed\end{trivlist}}
\begin{document}

\maketitle

\begin{quote}
 {\bf Dedication.}  The authors dedicate this work to the memory of Jim Lambek.
\end{quote}

\bigskip
\bigskip

\begin{abstract}
{\it Derivations} provide a way of transporting ideas from the calculus of manifolds to algebraic settings where there 
is no sensible notion of limit. In this paper, we consider derivations in certain monoidal categories, called {\it 
codifferential categories}.  Differential categories were introduced as the categorical framework for modelling differential linear logic. The deriving transform of a differential category, which models the differentiation inference rule, is a derivation in the dual category. We here explore that derivation's universality. 

One of the key structures associated to a codifferential category is an {\it algebra modality}. This is a monad $T$ such that each object of the form $TC$ is canonically an associative, commutative algebra. Consequently, every $T$-algebra has a canonical commutative algebra structure, and we show that universal derivations for these algebras can be constructed quite generally. 

It is a standard result that there is a bijection between derivations from an associative algebra $A$ to an $A$-module 
$M$ and algebra homomorphisms over $A$ from $A$ to $A\oplus M$, with $A\oplus M$ being considered as an infinitesimal extension of $A$. We lift this correspondence to our setting by showing that in a codifferential category there is a canonical $T$-algebra structure on $A\oplus M$. We call $T$-algebra morphisms from $TA$ to this $T$-algebra structure {\it Beck $T$-derivations}. This yields a novel, generalized notion of derivation.

The remainder of the paper is devoted to exploring consequences of that definition. Along the way, we prove that the 
symmetric algebra construction  in any suitable symmetric monoidal category provides an example of  codifferential 
structure, and using this, we give an alternative definition for differential and codifferential categories.

\end{abstract}

\pagebreak

\section{Introduction}

The theory of K\"ahler differentials \cite{H,M} provides an analogue of the theory of 
differential forms and all of its various uses in settings other than the usual setting of smooth manifolds. 
They were originally introduced by K\"ahler as an abstract algebraic notion of differential form. One of their 
advantages is that  they can be applied to  varieties which are not also 
smooth manifolds, such as singular varieties in characteristic 0 or arbitrary varieties over a field of characteristic $p$.
In a setting where one does not have access to limits, one can still talk about {\it derivations}. That is to say one 
passes from the variety to its coordinate ring, and then considers a module over that ring. A derivation is then a linear 
map from the algebra to the module satisfying the Leibniz rule. The module of {\it K\"ahler differentials} or {\it K\"ahler 
module} is then a module equipped with a universal derivation. As usual, such a module is unique up to 
isomorphism. 

Since this initial work, the idea of extending differential forms to more and more abstract settings has advanced in a 
number of different directions.  As one important example, we mention the noncommutative differential 
forms that arise in noncommutative geometry \cite{Landi}.

{\em Differential linear logic} \cite{ER1,ER2} arose originally from semantic concerns. Ehrhard \cite{Ehr1,Ehr2}
had constructed several models of linear logic \cite{G} in which the hom-sets had a natural differentiation operator. 
Ehrhard and Regnier then described this operation as a sequent rule and represented it as a construction and a 
rewrite rule for both interaction nets and for $\lambda$-calculus. The corresponding categorical structures were 
introduced in \cite{BCS,BCS2} and called {\it differential categories} and {\it cartesian differential categories}. 
Cartesian differential categories are an axiomatization of the coKleisli category of a differential category. 

The notion of {\it K\"ahler category} \cite{BCPS} began with the observation that the {\it deriving transform}, the key 
feature of differential categories, is a derivation and, under certain assumptions, has a universal property discussed 
below. (Actually, we must work with the dual notion of codifferential category. If we worked with coalgebras and coderivations, we could work in differential categories and all of the following work, suitably {\it op}-ed, would still hold.) It thus seemed likely 
that an abstract monoidal setting in which K\"ahler differential modules could be defined would apply to 
differential categories. In fact, the original paper only partially resolved this issue. In the present paper, we 
provide a much more satisfying answer by generalizing the notion of derivation to take into account all of the 
codifferential structure, thereby establishing a suitable universal property in full generality. 

A K\"ahler category is an additive, symmetric monoidal category with an {\it algebra modality}, i.e. a monad $T$ such 
that each object of the form $TC$ is equipped with a commutative, associative algebra structure and several 
coherence equations hold, and each associative algebra has an object of universal derivations. In essence, we are 
requiring a K\"ahler module for each free $T$-algebra.

The present paper extends the work of \cite{BCPS} in several ways. It is not surprising that, given all the structure at 
hand, one can endow every $T$-algebra with the structure of  a commutative, associative algebra.
We show that in a K\"ahler 
category, one can use the existence of K\"ahler objects for free $T$-algebras to derive K\"ahler objects for all 
algebras\footnote{We realize that the unavoidable use of the word {\it algebra} in two different ways is confusing. The
word algebra without a $T-$ in front of it will always mean commutative, associative algebra.} that arise in this way. 
Thus if the algebra category is monadic over the base, we can derive K\"ahler modules for all algebras by a single 
uniform procedure. These results follow from the M.Sc. thesis of the third author \cite{O}. 

We also tackle the idea of what it means to be a derivation. It is well-known \cite{Bour} that if $A$ is a commutative 
algebra and $M$ is an $A$-module, then there is a canonical algebra structure on $A\oplus M$ such that derivations
from $A$ to $M$ are in bijective correspondence to algebra maps over $A$ from $A$ to $A\oplus M$. Essentially the
algebra $A\oplus M$ is the extension of $A$ by $M$-infinitesimals. This idea was used in a much more general 
setting by Beck \cite{B}.

While this is a straightforward calculation, it has far-reaching generalizations. First we show that in a 
codifferential category, given a $T$-algebra $(A,\nu)$ and a module $M$ over the algebra 
associated to $A$, there is a canonical $T$-algebra structure on $A\oplus M$ which under the passage from $T$-
algebras to algebras yields 
the traditional associative algebra structure on $A\oplus M$ from \cite{B}. We call this $T$-algebra $W(A,M)$.
We then define a {\it Beck $T$-derivation on $A$ valued in $M$} to be a map of $T$-algebras from 
$(A,\nu)$ to $W(A,M)$ in the slice category over $A$.  Beck $T$-derivations can be equivalently given by morphisms $\del\colon A\rarr M$ satisfying a {\it chain rule} condition with respect to $T$.

We show that the symmetric algebra monad yields a
codifferential category in a very general setting and in this case, our notion of Beck $T$-derivation is equivalent to 
the usual notion of derivation. 

We define a {\it module of K\"ahler $T$-differentials} to be an $A$-module with a universal Beck $T$-derivation. 
We then show that the deriving transform in a codifferential category is always universal in this sense.
In fact, every $T$-algebra has a universal $T$-derivation. 
Our analysis also yields an equivalent definition of differential category we believe will be valuable in 
generalizations of this abstract notion of differentiation. For example, it generalizes in a straightforward way to
noncommutative settings. 

We note that in \cite{DK}, Dubuc and Kock define a notion of derivation on an algebra of a {\it Fermat theory}, the latter being a finitary set-based algebraic theory extending the theory of commutative rings and satisfying a certain axiom. It would be interesting to compare their notion with the notion of $T$-derivation defined here in the monoidal context of codifferential categories.

The extension of K\"ahler categories and codifferential categories to noncommutative settings is an important project, and work of this sort has already begun \cite{Co}. In that paper, Cockett has explored 
the implications of demanding for each $T$-algebra $A$ and each $A$-bimodule $M$ a {\it given} $T$-algebra structure on $A\oplus M$ satisfying certain axioms, whereas here we have shown that in the setting of a codifferential category, a $T$-algebra structure on $A\oplus M$ can be defined in terms of the given codifferential structure.

\bigskip

\section{Derivations and categorical frameworks}

This section covers the theory of derivations, both in its classical formulation with respect to algebras over a field 
and several of its more abstract categorical formulations.

\subsection{Classical case}

Derivations were originally considered for associative, commutative algebras over a field and are employed in algebraic geometry and commutative algebra \cite{E,H}.

\begin{defn}{\em Let $k$ be a commutative ring, $A$ a commutative $k$-algebra, and
$M$ an $A$-module. (All modules throughout the paper will be {\it left} modules.)

A \emph{$k$-derivation} from $A$ to $M$ is a $k$-linear map $\del: A\to M$
such that $\del(aa')=a\del(a')+a'\del(a)$.
}\end{defn}
One can readily verify under this definition that $\del(1)=0$ and hence
$\del(r)=0$ for any $r\in k$.

\begin{defn}{\em
Let $A$ be a $k$-algebra. A \emph{module of $A$-differential forms} is
an $A$-module
$\Omega_A$ together with a $k$-derivation
$\del: A\to\Omega_A$  which is universal in the following sense:
For any $A$-module $M$, and for any $k$-derivation $\del': A\to M$,
there exists a unique $A$-module homomorphism $f:\Omega_A\to M$ such
that $\del'=\del; f$.
}\end{defn}

\begin{lem} For any commutative $k$-algebra $A$, a module of $A$-differential forms
exists.
\end{lem}

There are several well-known constructions. The most straightforward, although
the resulting description is not that useful, is obtained by constructing
the free $A$-module generated by the symbols $\{\del a\mid a\in A\}$ divided out
by the evident relations, most significantly $\del(aa')=a\del(a')+a'\del(a)$.

\subsection{Derivations as algebra maps}\label{Alg=Der}

We suppose we are working in the category of vector spaces over a field $k$, that $A$ is a commutative $k$-algebra 
and $M$ an $A$-module. Define an associative, commutative algebra structure on $A\oplus M$ by 

\[(a,m)\cdot(a',m')=(aa',am'+a'm)\]

It is evident that this is associative, commutative and unital. We will refer to this algebra structure as the {\it 
infinitesimal extension of $A$ by 
$M$}. But its interest comes from the following observation. 

\begin{lem}\label{Beck-used-it}
There is a bijective correspondence between $k$-derivations from $A$ to $M$ and $k$-algebra homomorphisms from $A$ to 
$A\oplus M$ which are the identity in the first component. Or more succinctly:

\[Der_k(A,M)\cong Alg/A(A,A\oplus M)\]

Here, $Alg/A$ is the slice category of objects over $A$ in the category {\it Alg} of $k$-algebras. 
\end{lem}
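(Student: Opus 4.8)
The plan is to exhibit the bijection explicitly and then verify it is well-defined in both directions. Given a $k$-algebra homomorphism $\phi\colon A\to A\oplus M$ that is the identity in the first component, I would write $\phi(a)=(a,\del(a))$ for some $k$-linear map $\del\colon A\to M$, the second component of $\phi$. The key claim is that $\phi$ is an algebra homomorphism if and only if $\del$ is a $k$-derivation, and conversely each derivation $\del$ determines such a $\phi$ by the same formula. So the assignment $\phi\mapsto\del$ and its inverse $\del\mapsto\phi$ will be mutually inverse essentially by construction, once I check that each side lands in the correct class of maps.

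First I would run the forward direction. Writing $\phi(a)=(a,\del(a))$, the requirement that $\phi$ be $k$-linear is equivalent to $\del$ being $k$-linear (the first component is the identity, which is automatically linear). Next I would impose multiplicativity: using the infinitesimal extension product, $\phi(a)\cdot\phi(a')=(a,\del(a))\cdot(a',\del(a'))=(aa',\,a\del(a')+a'\del(a))$, whereas $\phi(aa')=(aa',\del(aa'))$. Comparing second components shows $\phi$ is multiplicative precisely when $\del(aa')=a\del(a')+a'\del(a)$, which is the Leibniz rule. For the unit, I would note that $\phi(1)=(1,\del(1))$ must equal the unit $(1,0)$ of $A\oplus M$, forcing $\del(1)=0$; but as remarked after Definition~2.1 this follows automatically from the Leibniz rule, so the unital condition imposes nothing new. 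This establishes that the second-component map of any such $\phi$ is a derivation.

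For the reverse direction I would take an arbitrary $k$-derivation $\del\colon A\to M$, define $\phi\colon A\to A\oplus M$ by $\phi(a)=(a,\del(a))$, and verify that $\phi$ is a $k$-algebra homomorphism that is the identity in the first component. Linearity and the identity-in-first-component condition are immediate, and multiplicativity is exactly the computation above read in reverse, using the Leibniz rule; unitality follows from $\del(1)=0$. Thus $\phi$ genuinely lies in $Alg/A(A,A\oplus M)$, where the slice condition (being a map over $A$) is encoded by the first component being the identity via the canonical projection $A\oplus M\to A$.

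Since both assignments are given by the single formula $\phi(a)=(a,\del(a))$, they are visibly inverse to one another: starting from $\del$, forming $\phi$, and reading off its second component returns $\del$, and starting from $\phi$ and reconstructing the map from its second component returns $\phi$. The bijection is moreover natural, though naturality is not required by the statement. I do not anticipate a serious obstacle here; the content is entirely the back-and-forth translation between the Leibniz rule and multiplicativity, and the only point needing a word of care is confirming that the unital and slice-category conditions are automatically satisfied rather than imposing extra constraints, which is handled by the observation that $\del(1)=0$ follows from Leibniz.
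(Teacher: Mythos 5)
Your proof is correct and is exactly the standard computation the paper has in mind: the paper states this lemma without proof, calling it ``a straightforward calculation,'' and your back-and-forth translation between multiplicativity of $\phi(a)=(a,\del(a))$ and the Leibniz rule, together with the observation that $\del(1)=0$ is forced (and automatic from Leibniz), is precisely that calculation. Nothing is missing.
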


We also note that it is straightforward to lift this result to the level of additive symmetric monoidal categories, see Section \ref{sec:cat_struct}. The notions of commutative algebra and module are expressible in any symmetric monoidal category. Once one has
 additive structure then the notion of derivation is definable as well. The correspondence of Lemma \ref{Beck-used-it}
then extends to this more general setting. 
Lemma \ref{Beck-used-it} also provided Jon Beck \cite{B} a starting point for a far-reaching generalization
of the notion of derivation for the purposes of cohomology theory.
One of the primary contributions of this paper is to lift the correspondence of Lemma \ref{Beck-used-it} to the level of codifferential categories. The fact that these 
ideas continue to hold at this level is testament to the importance of Beck's ideas about cohomology.

\subsection{Categorical structure}\label{sec:cat_struct}

It is a standard observation \cite{CWM,K} that the notions of algebra (monoid) and module over an algebra 
make sense in any monoidal category and the notion of commutative algebra makes sense in any symmetric monoidal category.  But to discuss derivations for an algebra we also need additive structure.

\begin{defn}\label{def:ordinary_dern}

{\em 
\begin{enumerate}
\item
A symmetric monoidal category $\mathcal{C}$\
is \emph{additive} if it is
enriched over commutative monoids and the tensor functor is additive in both variables.\footnote{In particular, we only need addition and unit on
Hom-sets, rather than abelian group structure.}.  
\item Let $(A,m_A,e_A)$ be an algebra in an additive symmetric monoidal category\footnote{We will use the notation $m_A$ and $e_A$ for the multiplication and unit for $A$.}, and
$M=\langle M,\bullet_M:A\ox M\to M\rangle$ an $A$-module. Then a \emph{derivation to $M$}
is an arrow $\partial: A\to M$ such that (with $m$ being the multiplication)
\[m;\partial=c;1\otimes \partial;\bullet_M+1\otimes \partial;\bullet_M
\mbox{\em~~~~~~ and ~~~~~~}
\partial(1)=0\]

\end{enumerate}}
\end{defn}

\begin{rem}\label{rem:derns_alg_maps_mon}\textnormal{
We note that Lemma \ref{Beck-used-it} holds at this level of
generality as well.  Indeed, given a commutative algebra $A$ in an
additive symmetric monoidal category $\cC$ with finite coproducts
(equivalently, finite biproducts) and an $A$-module $M$, we can equip
$A \oplus M$ with the structure of a commutative algebra \cite{BCPS}.  
Derivations $A \rightarrow M$ then correspond to maps $A
\rightarrow A \oplus M$ in the slice category $Alg/A$ over $A$ in the
category $Alg$ of commutative algebras in $\cC$ 
\cite{BCPS}.  As noted in \cite[\S 4.2]{BCPS}, every map of $A$-modules
$h:M \rightarrow N$ determines an algebra map $1 \oplus h:A \oplus M
\rightarrow A \oplus N$, whence each derivation $\partial:A
\rightarrow M$ determines a composite derivation $A
\xrightarrow{\partial} M \xrightarrow{h} N$.  Further, given a map of
commutative algebras $g:A \rightarrow B$, each $B$-module $N$
determines an $A$-module $N_A$, the \textit{restriction of scalars of
$N$ along $g$}, consisting of the object $N$ of $\cC$ equipped with
the composite $A$-action
$$A \otimes N \xrightarrow{g \otimes 1} B \otimes N
\xrightarrow{\bullet_N} B\;.$$
Moreover, given an algebra map $g:A \rightarrow B$ and a derivation
$\partial:B \rightarrow N$, the composite $A \xrightarrow{g} B
\xrightarrow{\partial} N$ is a derivation $A \rightarrow N_A$.}
\end{rem}

 As for most algebraic structures, when one adds in an appropriate notion of universality, the result is a very powerful
mathematical object. For derivations, we obtain the module of {\it K\"ahler differentials} or {\it K\"ahler module}.
We cite \cite{H,M} for calculations and examples. 

\begin{defn}{\em 
Let \cC\ be an additive symmetric monoidal
category and let $A$ be a commutative algebra in \cC. A {\em module of K\"ahler differentials} is an $A$-module
$\Omega_A$ together with a derivation $\del: A\to \Omega_A$, such that for every $A$-module M, and for every 
derivation $\del': A\to M$,
there exists a unique $A$-module map
$h:\Omega_A\to M$ such that  $\del;h=\del'$.
\[
\xymatrix{A \ar[r]^{\partial} \ar[rd]_{\partial'} & \Omega_A
\ar@{..>}[d]^h \\ & M}
\]

}\end{defn}

An axiomatization of a very different sort which attempted to capture the process of differentiation axiomatically
is the theory of {\it differential categories} \cite{BCS}. Since in this paper we wish to work with algebras and 
derivations as opposed to coalgebras and coderivations, we work in the dual theory of {\it codifferential categories}.

 \begin{defn}{\em
An  \emph{algebra modality} on a symmetric monoidal category \cC\ consists of a monad $(T,\mu,\eta)$ on \cC, 
and for each object $C$ in $\mathcal{C}$, a pair of morphisms (note we are denoting the tensor unit by $k$)

$$m : T(C)\otimes T(C)\to T(C), \hspace*{1cm} e: k\to T(C)$$

\noindent making $T(C)$ a commutative algebra
such that this family of associative algebra structures satisfies evident naturality conditions \cite{BCPS}.
}\end{defn}

\begin{defn}\label{def:codiff_cat}
{\em
An additive symmetric monoidal category with an algebra modality is a
\emph{codifferential category} if it is also equipped with a
\emph{deriving transform}\footnote{We use the terminology of a
{\em deriving transform} in both differential and
codifferential categories.}, {\em i.e.} a transformation natural in $C$
$$d_{T(C)}: T(C)\to T(C)\otimes C$$
satisfying the following four equations\footnote{For simplicity, we
write as if  the monoidal structure is strict.}:
\begin{description}
\item [(d1)] $e;d=0$ \,\,\, {\sl (Derivative of a constant is 0.)}
\item [(d2)]  $m;d=(1\otimes d);(m\otimes 1) +
(d\otimes 1);c;(m\otimes 1)$ (where $c$ is
the appropriate symmetry)  \,\,\, {\sl (Leibniz Rule)}
\item [(d3)] $\eta;d=e\otimes 1$  \,\,\, {\sl (Derivative of a linear function is constant.)}
\item [(d4)]$\mu;d=d;\mu\otimes d;m\otimes 1$ \,\,\, {\sl (Chain Rule)}
\end{description}

}\end{defn}

We make the following evident observation, noting that the morphism $u^{TC}_C:=e\ox 1\colon C=k\ox C\rarr T(C)\ox C$
exhibits $T(C)\ox C$ as the free $T(C)$-module on $C$. 

\begin{lem}
When $T(C)\ox C$ is considered as the free $T(C)$-module generated by $C$, then the above deriving transform 
is a derivation.
\end{lem}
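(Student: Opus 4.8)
The plan is to check the two clauses of Definition \ref{def:ordinary_dern} directly for the arrow $\partial = d_{T(C)}\colon T(C)\rarr T(C)\ox C$, with the algebra taken to be $A = T(C)$ carrying its given commutative multiplication $m$, and the module taken to be $M = T(C)\ox C$ with the free action $\bullet_M = m\ox 1_C\colon T(C)\ox(T(C)\ox C)\rarr T(C)\ox C$ (multiply on the $T(C)$-factor), as exhibited by the unit $u^{TC}_C = e\ox 1$. The clause $\partial(1)=0$, i.e.\ $e;d = 0$, is literally axiom (d1), so that half requires no further argument; everything reduces to the Leibniz clause.

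For the Leibniz clause I must verify $m;d = c;(1\ox d);\bullet_M + (1\ox d);\bullet_M$. With $\bullet_M = m\ox 1$, the second summand $(1\ox d);(m\ox 1)$ is exactly the first summand of (d2), so these agree on the nose. The real content is to match the first summand $c;(1\ox d);(m\ox 1)$ with the remaining summand $(d\ox 1);c;(m\ox 1)$ of (d2), where the inner $c$ there is the symmetry $1\ox c_{C,T(C)}$ dictated by the types.

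To do this I would first rewrite $c;(1\ox d)$ by naturality of the symmetry, applied with $f=d$ and $g=1_{T(C)}$, to get $c_{T(C),T(C)};(1_{T(C)}\ox d) = (d\ox 1_{T(C)});c_{T(C)\ox C,\,T(C)}$. I would then expand the outer block symmetry by the hexagon coherence identity as $c_{T(C)\ox C,\,T(C)} = (1_{T(C)}\ox c_{C,T(C)});(c_{T(C),T(C)}\ox 1_C)$, and postcompose with $m\ox 1$. At that point the trailing factor $(c_{T(C),T(C)}\ox 1_C);(m\ox 1)$ collapses to $m\ox 1$, precisely because the algebra $T(C)$ is commutative, i.e.\ $c_{T(C),T(C)};m = m$. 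This leaves $c;(1\ox d);(m\ox 1) = (d\ox 1);(1\ox c_{C,T(C)});(m\ox 1)$, which is the second summand of (d2); adding the two summands recovers $m;d$ by (d2), so $d$ is a derivation.

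The only genuine subtlety, and the single place the hypotheses really enter, is this reconciliation of the two presentations of the Leibniz term: Definition \ref{def:ordinary_dern} places the symmetry on the outside ($c;(1\ox d)$) while (d2) places it on the inside ($(d\ox 1);c$). Bridging the two is routine symmetric-monoidal bookkeeping (naturality of $c$ and one application of the hexagon) except for the essential input $c;m = m$, the commutativity of the algebra structure carried by the algebra modality. I expect no other obstacle, and indeed this explains structurally why commutativity is exactly what is needed, while the constant-derivative clause needs nothing beyond (d1).
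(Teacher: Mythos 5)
Your proof is correct: the paper states this lemma without proof as an ``evident observation,'' and your verification---matching $\partial(1)=0$ with (d1) and the Leibniz clause with (d2) via naturality of the symmetry, the hexagon identity, and commutativity $c;m=m$---is exactly the routine unfolding the authors leave implicit. Nothing further is needed.
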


This leaves the question of its universality. We know there is a universal property for the object 
$T(C)\ox C$ as the free $T(C)$-module generated by $C$. Is this sufficient to guarantee the universality necessary to be a K\"ahler module?
With this question in mind, the paper \cite{BCPS} introduced the notion of a {\it K\"ahler category} but only partially
answered this question. 

\begin{defn}{\em 
A {\em K\"ahler category} is an additive symmetric monoidal
category with
\begin{itemize}
\item a monad $T$,
\item a (commutative) algebra modality for $T$,
\item for all objects $C$, a $T(C)$-module of K\"ahler differential forms,
satisfying the universal property of a K\"ahler module.
\end{itemize}}
\end{defn}

Thus the previous question can be formulated as whether every codifferential category is a K\"ahler category. The 
original paper \cite{BCPS} had a partial answer to this question. 
In the present paper, we give a much more satisfying answer to this question. The key is to abstract even further the 
notion of derivation. We use ideas from Jon Beck's remarkable thesis \cite{B}. This will be covered in Section \ref{beckt}.

\subsection{Universal derivations for $T$-algebras}

In a category with an algebra modality we may endow each $T$-algebra with the structure of a commutative algebra, 
in such a way that the structure map of the $T$-algebra is a morphism of associative algebras. Since universal derivations are a priori 
only defined for the algebras arising axiomatically in a K\"ahler category, it is natural to ask if universal derivations 
from these 
new associative algebras exist and, if so, how they are constructed. We examine this issue now and demonstrate 
that there is a very pleasing answer. The construction of such K\"ahler modules is from the third author's M.Sc. 
thesis \cite{O}. We first note the following procedure for assigning algebra structure to $T$-algebras.

\begin{thm} \label{T-Alg-to-Alg}

Let \cC\ be a symmetric monoidal category equipped with an algebra modality $T$. The following 
construction determines a functor from the category of $T$-algebras to the category of commutative associative 
algebras in \cC. Let $(A, \nu)$ be a $T$-algebra in such a category. Define the multiplication for an algebra structure on $A$ by the formula

\[A\ox A\stackrel{\eta\ox\eta}{-\!\!\!\!\!-\!\!\!\!\!-\!\!\!\longrightarrow}TA\ox TA
\stackrel{m}{-\!\!\!\!\!-\!\!\!\!\!-\!\!\!\longrightarrow} TA\stackrel{\nu}{-\!\!\!\!\!-\!\!\!\!\!-\!\!\!\longrightarrow}A\]

\noindent with unit given by 

\[k\stackrel{e}{-\!\!\!\!\!-\!\!\!\!\!-\!\!\!\longrightarrow}TA\stackrel{\nu}{-\!\!\!\!\!-\!\!\!\!\!-\!\!\!\longrightarrow}A\]

\noindent In particular, every map of $T$-algebras becomes an associative algebra map. 

Also note that if we apply this construction to the free $T$-algebra $(TA,\mu)$, we get back the original 
associative algebra $(TA,m,e)$.

\end{thm}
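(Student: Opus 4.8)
The plan is to exhibit the proposed multiplication $\bar m_A = (\eta_A \otimes \eta_A);m;\nu$ and unit $\bar e_A = e;\nu$ on $A$ as structure transported along the structure map $\nu\colon TA \to A$, which is a split epimorphism with section $\eta_A$ (since $\eta_A;\nu = 1_A$), and then to reduce each algebra axiom for $(A,\bar m_A,\bar e_A)$ to the corresponding axiom already available for the modality algebra $(TA,m,e)$. Throughout I will freely use the monad laws for $(T,\mu,\eta)$, naturality of $\eta$, and the two coherence conditions built into an algebra modality: that $m$ and $e$ are natural in $C$, so that each $Tf$ is a morphism of algebras, and that each $\mu_C\colon T(TC) \to TC$ is a morphism of algebras from $(T(TC),m_{TC},e_{TC})$ to $(TC,m_C,e_C)$.

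The key step is to prove that $\nu$ is \emph{multiplicative} for the candidate structure, that is, $(\nu \otimes \nu);\bar m_A = m;\nu$ (the unit compatibility $e;\nu = \bar e_A$ is immediate by definition). I would establish this by a single chain of rewrites: first replace each copy of $\nu;\eta_A$ by $\eta_{TA};T\nu$ using naturality of $\eta$; next convert $(T\nu \otimes T\nu);m$ into $m_{TA};T\nu$ by naturality of the modality multiplication; then apply the $T$-algebra associativity law $T\nu;\nu = \mu_A;\nu$; then rewrite $m_{TA};\mu_A$ as $(\mu_A \otimes \mu_A);m$ using that $\mu$ is an algebra morphism; and finally collapse each $\eta_{TA};\mu_A$ to the identity by the monad unit law. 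This is the one genuinely intricate computation, and I expect it to be the main obstacle, precisely because it is here that all of the monad data and the modality coherence conditions must interlock.

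Once multiplicativity of $\nu$ is in hand, the remaining axioms are immediate, because $\nu$ — and hence each $\nu^{\otimes n}$, with section $\eta_A^{\otimes n}$ — is a split epimorphism. To prove associativity I would precompose the two associativity composites $A^{\otimes 3} \to A$ with the epimorphism $\nu^{\otimes 3}$; applying the multiplicative identity twice turns each side into $\nu^{\otimes 3}$ followed by $(m \otimes 1);m;\nu$ and $(1 \otimes m);m;\nu$ respectively, and these agree by associativity of $(TA,m,e)$. Commutativity is even more direct, following from naturality of the symmetry together with commutativity of $m$, with no appeal to the epimorphism needed. The unit laws follow the same pattern: precomposing with the epimorphism $\nu$ and invoking the multiplicative identity reduces the left unit law to $(e \otimes 1);m = 1_{TA}$, and the right unit law then follows by commutativity. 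This yields that $(A,\bar m_A,\bar e_A)$ is a commutative, associative, unital algebra.

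For functoriality, given a map of $T$-algebras $f\colon (A,\nu_A) \to (B,\nu_B)$, characterised by $\nu_A;f = Tf;\nu_B$, I would check directly that $f$ is an algebra morphism: starting from $\bar m_A;f$, substitute the $T$-algebra-map equation, push $Tf$ through $m$ by naturality of the modality multiplication, and convert each $\eta_A;Tf$ into $f;\eta_B$ by naturality of $\eta$, arriving at $(f \otimes f);\bar m_B$; the unit is handled the same way using naturality of $e$. Since the assignment is the identity on underlying morphisms, preservation of identities and composites is automatic, giving the desired functor into commutative algebras. Finally, applied to the free $T$-algebra $(TA,\mu_A)$ the construction returns the original structure: $\bar m_{TA} = (\eta_{TA} \otimes \eta_{TA});m_{TA};\mu_A = (\eta_{TA} \otimes \eta_{TA});(\mu_A \otimes \mu_A);m = m$, using that $\mu$ is an algebra morphism and the monad unit law $\eta_{TA};\mu_A = 1$, and likewise $\bar e_{TA} = e_{TA};\mu_A = e$ by the unit part of the same compatibility, recovering $(TA,m,e)$.
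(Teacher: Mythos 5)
Your proposal is correct. The paper actually states Theorem \ref{T-Alg-to-Alg} without proof (the verification is treated as routine and deferred to \cite{O}), so there is no argument in the text to compare against; your transport-of-structure argument --- establishing $(\nu\otimes\nu);\bar m_A = m;\nu$ via naturality of $\eta$ and $m$, the $T$-algebra law, and the coherence that $\mu$ is an algebra morphism, then using that $\nu$ is split epi with section $\eta_A$ to reduce associativity, unitality, and commutativity to the corresponding axioms for $(TA,m,e)$ --- is exactly the standard one, and correctly identifies that the whole construction hinges on the ``evident naturality conditions'' of the algebra modality (naturality of $m$, $e$ and the requirement that $\mu_C$ be an algebra map). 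The functoriality and free-algebra checks are likewise correct.
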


\begin{defn}\em Let \cC\ be an additive symmetric monoidal category. Let $A$ and $B$ be algebras with universal 
derivations as in the diagram below. Let $f\colon A\rarr B$ be an algebra homomorphism.
Define \em $\Omega_{f}:\Omega_{A} \to \Omega_{B}$ \em to be the unique morphism of $A$-modules making

	$$\bfig
	\morphism(0,0)|b|/{>}/<750,0>[A`B;f]
	\morphism(0,0)|l|/{>}/<0,750>[A`\Omega_{A};d_{A}]
	\morphism(750,0)|r|/{>}/<0,750>[B`\Omega_{B};d_{B}]
	\morphism(0,750)|a|/{>}/<750,0>[\Omega_{A}`\Omega_{B};\Omega_{f}]
	\efig$$
commute, which exists by universality of $d_{A}$. One can verify that $\Omega_{(-)}$ is functorial.
\end{defn}

\smallskip

The existence of K\"ahler modules  for free $T$-algebras entails that K\"ahler modules  for arbitrary $T$-algebras 
can be obtained by taking a quotient, as is seen in the following theorem. 

\begin{thm} Defining $\Omega_{A,\nu}$ as the following coequalizer

$$\bfig
\morphism(0,0)|a|/@{>}@<+2pt>/<500,0>[\Omega_{T^2A}`\Omega_{TA};\Omega_\mu]
\morphism(0,0)|b|/@{>}@<-2pt>/<500,0>[\Omega_{T^2A}`\Omega_{TA};\Omega_{T\nu}]
\morphism(500,0)|a|/{>}/<500,0>[\Omega_{TA}`\Omega_{A,\nu};\Omega_\nu]
\morphism(1000,0)|a|/{}/<0,0>[\Omega_{A,\nu}`;]
\efig$$

gives us the module of K\"ahler differentials for $T$-algebra $(A,\nu)$.
\end{thm}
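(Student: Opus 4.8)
The plan is to verify directly that the coequalizer $\Omega_{A,\nu}$, once equipped with an $A$-module structure and a derivation $d_A\colon A\rarr\Omega_{A,\nu}$, represents the functor sending an $A$-module $M$ to the set of derivations $A\rarr M$; by Yoneda this is precisely the assertion that $(\Omega_{A,\nu},d_A)$ is the module of K\"ahler differentials of $(A,\nu)$. The starting point is the standard observation that the pair $\mu_A,T\nu\colon T^2A\rightrightarrows TA$ is reflexive (with common section $T\eta_A$) and that $\nu\colon TA\rarr A$ is its \emph{split}, hence \emph{absolute}, coequalizer in \cC, with splitting data $\eta_A,\eta_{TA}$ satisfying $\eta_A;\nu=1$, $\eta_{TA};\mu_A=1$, and $\eta_{TA};T\nu=\nu;\eta_A$. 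By Theorem \ref{T-Alg-to-Alg} the maps $\nu,\mu_A,T\nu$ are algebra homomorphisms, and the $T$-algebra axiom for $(A,\nu)$ reads $\mu_A;\nu=T\nu;\nu$.

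First I would show that precomposition with $\nu$ identifies derivations $\del\colon A\rarr M$ with those derivations $\delta\colon TA\rarr M$ (into $M$ regarded as a $TA$-module by restriction of scalars along $\nu$) that satisfy $\mu_A;\delta=T\nu;\delta$. Given $\del$, the composite $\nu;\del$ is such a derivation by Remark \ref{rem:derns_alg_maps_mon}, and the compatibility follows from $\mu_A;\nu=T\nu;\nu$. Conversely, any compatible $\delta$ factors through the absolute coequalizer as $\delta=\nu;\del$ with $\del=\eta_A;\delta$; that the resulting $\del$ is again a derivation is a routine check, using that $\nu$ and $\nu\ox\nu$ are split epimorphisms (the latter split by $\eta_A\ox\eta_A$) to transport the Leibniz identity for $\delta$ back along $\nu$.

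Next I would pass from derivations to module maps by invoking the universal derivations of the \emph{free} $T$-algebras, which exist by hypothesis. Universality of $d_{TA}$ gives a bijection, natural in $M$, between derivations $TA\rarr M$ and $TA$-module maps $g\colon\Omega_{TA}\rarr M$, with $\delta=d_{TA};g$. Under this bijection the compatibility condition becomes $\Omega_{\mu_A};g=\Omega_{T\nu};g$: the defining squares give $\mu_A;d_{TA}=d_{T^2A};\Omega_{\mu_A}$ and $T\nu;d_{TA}=d_{T^2A};\Omega_{T\nu}$, so $\mu_A;\delta$ and $T\nu;\delta$ equal $d_{T^2A};(\Omega_{\mu_A};g)$ and $d_{T^2A};(\Omega_{T\nu};g)$ respectively; since $\Omega_{\mu_A};g$ and $\Omega_{T\nu};g$ are $T^2A$-module maps out of $\Omega_{T^2A}$ into a common module (using again $\mu_A;\nu=T\nu;\nu$), the uniqueness half of the universal property of $d_{T^2A}$ makes their equality equivalent to $\Omega_{\mu_A};g=\Omega_{T\nu};g$. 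Combining the two steps yields a bijection, natural in $M$, between derivations $A\rarr M$ and module maps $\Omega_{TA}\rarr M$ coequalizing $\Omega_{\mu_A}$ and $\Omega_{T\nu}$.

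It remains to recognise the latter as module maps out of $\Omega_{A,\nu}$, and here lies the main obstacle: the coequalizer is formed in \cC, whereas the universal property I need is one of $A$-module maps. I would resolve this by descending the $TA$-module structure of $\Omega_{TA}$ along $\nu$ to an $A$-module structure on $\Omega_{A,\nu}$. The key leverage is that the presentation $T^2A\rightrightarrows TA\rarr A$ is \emph{absolute}, hence preserved by the functors $(-)\ox\Omega_{A,\nu}$ and by restriction of scalars; this lets the composite $TA\ox\Omega_{TA}\rarr\Omega_{TA}\rarr\Omega_{A,\nu}$ descend through $\nu\ox\Omega_\nu$ to a well-defined action $A\ox\Omega_{A,\nu}\rarr\Omega_{A,\nu}$, and exhibits the \cC-coequalizer $\Omega_\nu$ simultaneously as a coequalizer of $A$-module maps (equivalently, realises $\Omega_{A,\nu}$ as $A\ox_{TA}\Omega_{TA}$ modulo the relations carried by $\Omega_{T^2A}$). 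Once $\Omega_{A,\nu}$ is an $A$-module, I define $d_A$ as the unique factorization of $d_{TA};\Omega_\nu$ through $\nu$ --- available because $d_{TA};\Omega_\nu$ coequalizes $\mu_A,T\nu$, as $\Omega_\nu$ coequalizes $\Omega_{\mu_A},\Omega_{T\nu}$ --- and check it is a derivation exactly as in the second step. The bijection assembled above then reads as the universal property of a K\"ahler module, with $d_A$ the universal derivation, completing the identification of $\Omega_{A,\nu}$.
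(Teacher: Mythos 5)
Your argument is correct and is essentially the proof the paper has in mind: the paper omits a proof of this statement and defers to the method of Theorem \ref{main}, which is exactly your skeleton --- present $(A,\nu)$ as the split (hence reflexive, absolute) coequalizer $T^2A\rightrightarrows TA\rightarrow A$, coequalize $\Omega_\mu$ and $\Omega_{T\nu}$, descend the $TA$-module structure along $\nu\otimes\Omega_\nu$, define $d_A$ by factoring $d_{TA};\Omega_\nu$ through $\nu$, and lift derivations along the split epi $\nu$ to verify universality; your repackaging of the verification as a chain of natural bijections is only a cosmetic difference. The one place to tighten is the descent step: absoluteness of $T^2A\rightrightarrows TA\rightarrow A$ alone does not make $\nu\otimes\Omega_\nu$ a coequalizer, since $\Omega_\nu$ is merely a reflexive coequalizer (split by $\Omega_{T\eta}$), so you must also invoke the standing hypothesis that $\otimes$ preserves reflexive coequalizers in each variable and the $3\times 3$ argument (Johnstone's lemma), which is precisely the content of the lemma inside the paper's proof of Theorem \ref{main}.
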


This result was in the M.Sc. thesis of the third author \cite{O}. We do not give a proof of this result here as it can be 
obtained in a method similar to Theorem \ref{main}. We also note that, under suitable hypotheses, the existence of K\"ahler modules for arbitrary commutative algebras follows from Theorem \ref{main}.

\section{The symmetric algebra monad}

The most canonical example of an algebra modality is the symmetric algebra construction.  This construction 
as applied to the category of vector spaces gives one of the most basic examples of a codifferential category. In this 
case,  elements of the symmetric algebra are essentially polynomials, which are differentiated in the evident way. 
A similar construction works on the category of sets and relations. What we observe here is that the symmetric 
algebra construction provides examples of codifferential categories in a much more general setting. 

First, we need to explore a theme which will be the centrepiece of the last sections of the paper. This is the 
idea of viewing derivations as algebra homomorphisms.

\begin{rem}\label{monadic}
For the remainder of this section, we assume $\mathcal{C}$ is an additive symmetric monoidal category with finite 
coproducts and reflexive coequalizers, 
the latter of which are preserved by the tensor product in each variable. Let $Alg$ be the category of commutative 
algebras in $\mathcal{C}$, and suppose that the forgetful functor $Alg \to\mathcal{C}$ has a left adjoint. The 
resulting 
adjunction is then monadic; denote its induced monad by $\mathcal{S} = (S,\eta^S,\mu^S)$, so 
that $Alg \cong \mathcal{C}^\mathcal{S}$, and we henceforth identify these categories. See \cite{CWM} for details.
\end{rem}

\subsection{Structure related to the symmetric algebra}

We will also need the following straightforward observation:
\begin{prop}\label{thm:alg_modality_via_mnd_mor} The (commutative) algebra modalities on \cC\ are in bijective 
correspondence to pairs $(T,\psi)$, where $T$ is a 
monad and $\psi$ is a monad morphism $\psi\colon \sS\rarr T$. Such a morphism induces a functor 

\[F_\psi\colon T\mbox{\cal -Alg}\rarr\sS\mbox{\cal -Alg}\]

Furthermore, the map $\psi_C\colon \sS C\rarr TC$ is a map of algebras.

\end{prop}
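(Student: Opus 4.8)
The plan is to give, for each fixed monad $T$, mutually inverse constructions between algebra modalities on $T$ and monad morphisms $\psi\colon\mathcal{S}\rarr T$; the asserted bijection is then the union of these over all $T$. Throughout I use the identification $Alg\cong\cC^{\mathcal{S}}$ of Remark \ref{monadic}, under which a commutative algebra structure on an object $X$ is the same as an $\mathcal{S}$-algebra structure $\alpha\colon\sS X\rarr X$. In this language an algebra modality on $T$ repackages as a family of $\mathcal{S}$-algebra structures $\alpha_C\colon\sS(TC)\rarr TC$ (those classifying the commutative algebras $(TC,m,e)$), and the modality's evident naturality conditions say exactly that each $Tf$ is an $\mathcal{S}$-algebra map and that each $\mu_C$ is an $\mathcal{S}$-algebra map $(T^2C,\alpha_{TC})\rarr(TC,\alpha_C)$.

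From a monad morphism $\psi\colon\mathcal{S}\rarr T$ I would define $\alpha_C:=\mu_C\circ\psi_{TC}\colon\sS(TC)\rarr TC$. Conceptually this is the restriction-of-scalars functor $F_\psi\colon T\text{-Alg}\rarr\mathcal{S}\text{-Alg}$, sending $(A,\nu)$ to $(A,\nu\circ\psi_A)$, evaluated at the free $T$-algebra $(TC,\mu_C)$; that this $F_\psi$ is a well-defined functor commuting with the forgetful functors to $\cC$ is the standard fact that a monad morphism induces an algebraic functor between Eilenberg--Moore categories, and it also identifies the functor $F_\psi$ of the statement. Since each $(TC,\alpha_C)$ lies in the image of $F_\psi$ it is an $\mathcal{S}$-algebra, hence a commutative algebra; applying $F_\psi$ to the $T$-algebra maps $Tf\colon(TC,\mu_C)\rarr(TD,\mu_D)$ and $\mu_C\colon(T^2C,\mu_{TC})\rarr(TC,\mu_C)$ then yields precisely the naturality and $\mu$-compatibility conditions of the modality. (One checks as well that $F_\psi$ agrees with the functor of Theorem \ref{T-Alg-to-Alg}.)

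Conversely, from a modality with classifying maps $\alpha_C$ I would set $\psi_C:=\alpha_C\circ\sS(\eta_C)\colon\sS C\rarr TC$, which is the unique $\mathcal{S}$-algebra (equivalently, commutative-algebra) map extending $\eta_C\colon C\rarr TC$ along the free-algebra unit $\eta^S_C$; this is already the final assertion that each $\psi_C$ is a map of algebras. Naturality of $\psi$ follows from naturality of $\eta$ and of the modality together with uniqueness in the universal property of $\sS C$. The unit axiom $\psi_C\circ\eta^S_C=\eta_C$ holds by construction, while for the multiplicative axiom $\psi\circ\mu^S=\mu\circ(\psi\ast\psi)$ I would show that both composites are $\mathcal{S}$-algebra maps $\sS\sS C\rarr TC$ which agree after precomposition with $\eta^S_{\sS C}$, and conclude by uniqueness; the modality's $\mu$-compatibility is exactly what makes this succeed.

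Finally I would confirm that the two passages are mutually inverse. Starting from $\psi$ and returning, the recovered map is $\alpha_C\circ\sS(\eta_C)=\mu_C\circ\psi_{TC}\circ\sS(\eta_C)=\mu_C\circ T(\eta_C)\circ\psi_C=\psi_C$, using naturality of $\psi$ and the unit law $\mu_C\circ T\eta_C=1$; starting from a modality and returning, the recovered structure is $\mu_C\circ\psi_{TC}=\mu_C\circ\alpha_{TC}\circ\sS(\eta_{TC})=\alpha_C\circ\sS(\mu_C)\circ\sS(\eta_{TC})=\alpha_C$, using $\mu$-compatibility in the form $\mu_C\circ\alpha_{TC}=\alpha_C\circ\sS\mu_C$ together with $\mu_C\circ\eta_{TC}=1$. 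I expect the only genuinely non-formal point to be the translation between the modality's $\mu$-compatibility and the monad-morphism multiplicativity, as witnessed by these two computations; the unit and naturality statements, and the underlying Eilenberg--Moore correspondence, are routine.
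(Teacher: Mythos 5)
Your proof is correct. Note that the paper offers no argument for this proposition at all --- it is introduced as a ``straightforward observation'' --- so there is nothing to compare against; your write-up supplies exactly the standard argument one would expect: identify $Alg\cong\cC^{\sS}$ via the monadicity of Remark 3.1, pass from $\psi$ to the modality by restricting scalars along $\psi$ at the free $T$-algebras ($\alpha_C=\psi_{TC};\mu_C$), pass back by freeness of $SC$ ($\psi_C=S\eta_C;\alpha_C$), and check the two composites are identities using naturality of $\psi$ and the unit laws in one direction and $\mu$-compatibility in the other. The one point worth making explicit is that your argument (correctly) reads the ``evident naturality conditions'' of the algebra modality as including the requirement that each $\mu_C\colon T^2C\rarr TC$ be an algebra homomorphism, not merely that each $Tf$ be one; without that clause the multiplicativity axiom for $\psi$ cannot be derived and the bijection fails, so it is essential that this is part of the definition being used (as it is in the cited source).
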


\subsection{Codifferential structure}

\begin{defn}{\em 
Given an object $C$ in \cC, recall that $SC\ox C$ is the free $SC$-module on $C$. Hence by Remark
\ref{rem:derns_alg_maps_mon}, the direct sum $SC \oplus (SC \otimes C)$ carries the structure of an algebra, and derivations 
 $SC \rarr (SC \otimes C)$ correspond to algebra homomorphisms $SC \rarr SC \oplus (SC \otimes C)$
 whose first coordinate is the identity. But since $SC$ is the free algebra on $C$, the latter correspond to
 morphisms $C \rarr SC \oplus (SC \otimes C)$ whose first coordinate is $\eta\colon C\rarr SC$.

So let $d_{SC}:SC \to SC \otimes C$ be the derivation corresponding to the algebra homomorphism $SC \to SC 
\oplus (SC \otimes C)$ given on generators as $\begin{pmatrix} \eta_C \\ u_C \end{pmatrix}:C \to SC \oplus 
(SC \otimes C)$, where $u_C$ is the map $u_C\colon C\cong k\ox C\stackrel{e\ox 1}{-\!\!\!\!\!-\!\!\!\!\!-\!\!\!\longrightarrow}SC\ox C$.
}\end{defn}

\begin{thm}\label{thm:smalg_mnd_codiff}
$(\mathcal{C},S,d)$ is a codifferential category.
\end{thm}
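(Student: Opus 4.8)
Since the ambient category is already additive symmetric monoidal (Remark~\ref{monadic}) and carries the canonical algebra modality given by the free commutative algebra structure on each $SC$ (that corresponding under Proposition~\ref{thm:alg_modality_via_mnd_mor} to the identity monad morphism $\mathcal{S}\to\mathcal{S}$), the only thing to establish is that the deriving transform $d$ satisfies the four axioms \textbf{(d1)}--\textbf{(d4)}. The plan is to obtain \textbf{(d1)}, \textbf{(d2)}, \textbf{(d3)} essentially for free from the construction of $d$, and then to treat the chain rule \textbf{(d4)} as the real content. By its very definition $d_{SC}$ is a derivation from $SC$ into the free $SC$-module $SC\otimes C$; the two defining conditions of a derivation (Definition~\ref{def:ordinary_dern}) are exactly \textbf{(d1)} (annihilation of the unit, $e;d=0$) and \textbf{(d2)} (the Leibniz rule, whose two summands coincide with the symmetric Leibniz identity for the free-module action $\bullet=m\otimes 1$ by commutativity of $m$ and naturality of the symmetry $c$). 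Axiom \textbf{(d3)} is read off from the generators: the algebra map corresponding to $d_{SC}$ is $\hpair{1}{d_{SC}}$, and precomposing with $\eta^S_C$ returns $\hpair{\eta^S_C}{u_C}$ by the defining property of $d$, so that $\eta^S_C;d_{SC}=u_C=e\otimes 1$.

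It remains to prove the chain rule, which at the object $C$ reads
\[\mu^S_C;d_{SC}\;=\;d_{S^2C};(\mu^S_C\otimes d_{SC});(m\otimes 1)\colon S^2C\to SC\otimes C,\]
writing $S^2C$ for $S(SC)$. The central idea is that $S^2C$ is the free commutative algebra on $SC$, so by the Beck correspondence of Remark~\ref{rem:derns_alg_maps_mon} together with freeness, a derivation out of $S^2C$ is completely determined by its restriction along the unit $\eta^S_{SC}\colon SC\to S^2C$. Accordingly I would show that both sides of the displayed equation are derivations from $S^2C$ into the $S^2C$-module $(SC\otimes C)_{\mu^S_C}$ obtained from the free $SC$-module $SC\otimes C$ by restriction of scalars along the algebra map $\mu^S_C$, and that the two derivations agree after precomposition with $\eta^S_{SC}$.

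That each side is a derivation follows from the closure properties recorded in Remark~\ref{rem:derns_alg_maps_mon}. The left-hand side is the composite of the algebra homomorphism $\mu^S_C$ with the derivation $d_{SC}$, hence a derivation valued in the restriction of scalars. For the right-hand side I would first verify that $(\mu^S_C\otimes d_{SC});(m\otimes 1)\colon S^2C\otimes SC\to(SC\otimes C)_{\mu^S_C}$ is a morphism of $S^2C$-modules, where the source is the free $S^2C$-module on $SC$; this is a finite diagram chase whose only inputs are that $\mu^S_C$ is an algebra homomorphism (so it intertwines the two multiplications) and that $m$ is associative. Granting it, the right-hand side is the derivation $d_{S^2C}$ followed by a module map, hence again a derivation. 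This module-linearity verification is the one genuinely computational step, and I expect it to be the main obstacle.

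Finally I would check agreement on generators. Precomposing the left-hand side with $\eta^S_{SC}$ and using the monad identity $\eta^S_{SC};\mu^S_C=1_{SC}$ yields $d_{SC}$. Precomposing the right-hand side and applying \textbf{(d3)} at the object $SC$, namely $\eta^S_{SC};d_{S^2C}=e\otimes 1$, together with the fact that $\mu^S_C$ carries the unit of $S^2C$ to that of $SC$, reduces it to $(e\otimes d_{SC});(m\otimes 1)$; the left unit law $(e\otimes 1);m=1_{SC}$ then collapses this to $d_{SC}$ as well. Thus the two derivations coincide on generators, hence everywhere, which proves \textbf{(d4)} and completes the verification that $(\mathcal{C},S,d)$ is a codifferential category.
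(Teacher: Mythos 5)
Your proposal follows essentially the same strategy as the paper's proof: (d1) and (d2) are immediate from the definition of $d_{SC}$ as a derivation, (d3) is read off from the generators, and the chain rule is established by showing that both sides are derivations out of $S^2C$ valued in the restriction of scalars of $SC\otimes C$ along $\mu_C$, and then comparing them after precomposition with $\eta_{SC}$, using freeness of $S^2C$ on $SC$. The one place where you genuinely diverge is in showing that the counterclockwise composite $d_{S^2C};(\mu_C\otimes d_{SC});(m\otimes 1)$ is a derivation: the paper verifies the Leibniz rule for this composite by a direct multi-line computation (and checks the unit condition separately), whereas you factor it as the derivation $d_{S^2C}$ into the free $S^2C$-module $S^2C\otimes SC$ followed by the $S^2C$-module map $(\mu_C\otimes d_{SC});(m\otimes 1)$ and invoke the closure of derivations under postcomposition with module maps (Remark~\ref{rem:derns_alg_maps_mon}). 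Your module-linearity claim is correct --- it follows from $\mu_C$ being an algebra homomorphism together with associativity of $m$, exactly as you anticipate --- and this is arguably a cleaner packaging of the same underlying facts than the paper's explicit Leibniz computation.

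One omission: Definition~\ref{def:codiff_cat} requires $d$ to be a transformation \emph{natural} in $C$, and your proof never addresses naturality. The paper devotes a separate step to it, checking the naturality square for $f\colon C\to D$ after precomposition with $\eta_C$, which suffices because both composites around the square, packaged as maps $SC\to SD\oplus(SD\otimes D)$, are algebra homomorphisms out of the free algebra $SC$. You have all the tools to supply this --- it is the same reduction-to-generators technique you use everywhere else --- but as written the verification is incomplete without it.
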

\begin{prf}{}
$S$ is a commutative algebra modality on $\mathcal{C}$. Since each $d_{SC}$ is by definition a derivation, the Leibniz rule holds and precomposing $d_{SC}$ by $e_{SC}$ is the zero map. By the definition of $d_{SC}$, 
	\begin{displaymath}
	\eta_C;\begin{pmatrix} 1_{SC} \\ d_{SC} \end{pmatrix} = \begin{pmatrix} \eta_C \\ u_C \end{pmatrix}:C \to SC \oplus (SC \otimes C)
	\end{displaymath}
 so that 
	\begin{displaymath}
	\eta_C;d_{SC} = \eta_C;\begin{pmatrix} 1_{SC} \\ d_{SC} \end{pmatrix};\pi_2 =\begin{pmatrix} \eta_C \\ u_C \end{pmatrix};\pi_2 = u_C
	\end{displaymath}
and consequently (d3) holds.

It remains only to demonstrate naturality of $d$ and adherence to the chain rule condition. For naturality, consider a map $f:C \to D$ in $\mathcal{C}$; naturality of $d$ is equivalent to the commutativity of the following square:

$$\bfig
\morphism(0,0)|a|/{>}/<1000,0>[SC`SC \oplus (SC \otimes C);\begin{pmatrix} 1 \\ d_{SC} \end{pmatrix}]
\morphism(0,0)|l|/{>}/<0,-400>[SC`SD;Sf]
\morphism(1000,0)|r|/{>}/<0,-400>[SC \oplus (SC \otimes C)`SD \oplus (SD \otimes D);Sf \oplus (Sf \otimes f)]
\morphism(0,-400)|b|/{>}/<1000,0>[SD`SD \oplus (SD \otimes D);\begin{pmatrix} 1 \\ d_{SD} \end{pmatrix}]
\efig$$

\noindent Since each morphism in the square is an algebra morphism, commutativity of this square may be demonstrated by showing that the square is commutative when preceded by $\eta_C:C \to SC$. By naturality of $\eta$ and definition of $d_D$ we have on the left: 
	\begin{displaymath}
	\eta_C;Sf;\begin{pmatrix} 1 \\ d_{SD} \end{pmatrix} = f;\eta_D;\begin{pmatrix} 1 \\ d_{SD} \end{pmatrix} = f;\begin{pmatrix} \eta_D \\ u_{D} 			\end{pmatrix}
	\end{displaymath}
 By naturality of $\eta$ and $u$ and by definition of $d$ we have on the right: 
	\begin{displaymath}
	\eta_C; \begin{pmatrix} 1 \\ d_{SC} \end{pmatrix};Sf \oplus (Sf \otimes f) = \begin{pmatrix} \eta_C \\ u_{SC} \end{pmatrix};Sf \oplus (Sf 			\otimes f) = \begin{pmatrix} \eta_C;Sf \\ u_{SC};Sf \otimes f \end{pmatrix} = f;\begin{pmatrix} \eta_D \\ u_{D} \end{pmatrix}
	\end{displaymath}
and so naturality of $d$ is established.

To show that $d$ adheres to the chain rule, it is necessary and sufficient to show that the following square commutes

$$\bfig
\morphism(0,0)|a|/{>}/<2000,0>[S^2C`SC;\mu_C]
\morphism(0,0)|l|/{>}/<0,-400>[S^2C`S^2C \otimes SC;d_{S^2C}]
\morphism(2000,0)|r|/{>}/<0,-400>[SC`SC \otimes C;d_{SC}]
\morphism(0,-400)|b|/{>}/<1000,0>[S^2C \otimes SC`SC \otimes SC \otimes C;\mu_C \otimes d_{SC}]
\morphism(1000,-400)|b|/{>}/<1000,0>[SC \otimes SC \otimes C`SC \otimes C;m_{SC} \otimes 1]
\efig$$

When preceded by $\eta_{SC}$, commutativity of the resultant diagram is established by a routine verification. In order to show that this verification suffices, it must be shown that both paths in the above diagram yield derivations
when preceded by $\eta_{SC}$; the correspondence between derivations and morphisms of algebras then enables the utilization of the universal property of $\eta$ to deduce that the associated morphisms of algebras are equal.

Since $\mu_C$ is an associative algebra homomorphism, $\mu_C;d_{SC}$ is a derivation with respect to the $S^2C$-module structure that $SC\ox C$ acquires by restriction of scalars along $\mu_C$. As for the counterclockwise composite, the following computation demonstrates that it adheres to the Leibniz rule
\begin{align}
&m_{S^2C};d_{S^2C};\mu_C \otimes d_{SC};m_{SC} \otimes 1 \notag \\
&= (1\otimes d_{S^2C} + c;1\otimes d_{S^2C});m_{S^2C} \otimes 1;\mu_C \otimes d_{SC};m_{SC}\otimes 1 \notag \\
&= (1\otimes d_{S^2C} + c;1\otimes d_{S^2C});\mu_C \otimes \mu_C \otimes 1;m_{SC} \otimes 1;1 \otimes d_{SC};m_{SC} \otimes 1 \notag \\
&=(1\otimes (d_{S^2C};\mu_C \otimes d_{SC}) + c;1\otimes (d_{S^2C};\mu_C \otimes d_{SC}));\mu_C \otimes 1 \otimes 1 \otimes 1;m_{SC} \otimes 1 \otimes  1;m_{SC} \otimes 1 \notag \\
&= (1\otimes (d_{S^2C};\mu_C \otimes d_{SC};m_{SC} \otimes 1) + c;1\otimes (d_{S^2C};\mu_C \otimes d_{SC};m_{SC} \otimes 1));\mu_C \otimes 1 \otimes 1;m_{SC} \otimes 1 \notag
\end{align}

\noindent That the counterclockwise composite is $0$ when preceded by $e_{S^2C}$ is immediate, and the proof is complete.
\end{prf}

\section{Beck $T$-derivations}\label{beckt}

We now explore what we consider to be the main contribution of this paper.
The first step in this project is the following theorem, due to the second author. It lifts the 
correspondence between derivations and algebra homomorphisms to the level of $T$-algebras.
Throughout this section, we assume that \cC\ has finite coproducts.  

\begin{thm}
Let \cC\ be a codifferential category with finite coproducts. Let $(A,\nu)$ be a $T$-algebra and $M$ a module over its associated algebra.
Then $(A\oplus M,\beta)$ is a $T$-algebra with $\beta\colon T(A\oplus M)\rarr A\oplus M$ defined as follows. 

Evidently we need maps to $A$ and to $M$ which we define as follows:

\[\beta_1\colon T(A\oplus M) \stackrel{T\pi_1}{-\!\!\!\!\!-\!\!\!\!\!-\!\!\!\longrightarrow}
TA\stackrel{\nu}{-\!\!\!\!\!-\!\!\!\!\!-\!\!\!\longrightarrow} A\]

\[\beta_2\colon  T(A\oplus M) \stackrel{d}{-\!\!\!\!\!-\!\!\!\!\!-\!\!\!\longrightarrow}
T(A\oplus M)\ox (A\oplus M)\stackrel{T(\pi_1)\ox\pi_2}{-\!\!\!\!\!-\!\!\!\!\!-\!\!\!\longrightarrow}T(A)\ox M
\stackrel{\nu\ox 1}{-\!\!\!\!\!-\!\!\!\!\!-\!\!\!\longrightarrow}A\ox M\stackrel{\bullet}{-\!\!\!\!\!-\!\!\!\!\!-\!\!\!\longrightarrow}M\]

\end{thm}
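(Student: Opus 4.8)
Write $X := A\oplus M$. The plan is to check directly that $\beta=\langle\beta_1,\beta_2\rangle$ is an Eilenberg--Moore algebra for $T$, i.e. that $\eta_X;\beta=1_X$ and $T\beta;\beta=\mu_X;\beta$. Since $X$ is a biproduct, each law can be verified componentwise by post-composing with the projections $\pi_1$ and $\pi_2$, giving four equalities to establish.

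The unit law is the routine half. Post-composing $\eta_X;\beta$ with $\pi_1$ gives $\eta_X;T\pi_1;\nu$, which collapses to $\pi_1$ by naturality of $\eta$ and the unit law $\eta_A;\nu=1_A$ of $(A,\nu)$. For the $\pi_2$-component I would first rewrite $\eta_X;d$ as $e\otimes 1$ via (d3), then use that $T\pi_1$ preserves algebra units and the unit law of the action $\bullet$ to reduce the composite to $\pi_2$.

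The $\pi_1$-component of the associativity law is similarly routine: both $T\beta;\beta_1$ and $\mu_X;\beta_1$ reduce, by functoriality of $T$ and naturality of $\mu$, to $T^2\pi_1$ followed by $T\nu;\nu$ and by $\mu_A;\nu$ respectively, and these agree by the associativity axiom $T\nu;\nu=\mu_A;\nu$ of the $T$-algebra $(A,\nu)$. The heart of the proof is the $\pi_2$-component $T\beta;\beta_2=\mu_X;\beta_2$, and my strategy is to rewrite both sides so that they share the common prefix $d_{T^2 X}$ and then compare the remaining cofactors. On the left I apply naturality of $d$ along $\beta\colon TX\to X$ to get $T\beta;d_{TX}=d_{T^2X};(T\beta\otimes\beta)$, and simplify $T\beta\otimes\beta$ using $\beta;\pi_1=\beta_1$ and $\beta;\pi_2=\beta_2$. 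On the right I apply the chain rule (d4) to $\mu_X;d_{TX}$ and then push the projections through the resulting composite, using that $T\pi_1$ is an algebra morphism to rewrite $m;T\pi_1$ as $(T\pi_1\otimes T\pi_1);m$ and naturality of $\mu$ to rewrite $\mu_X;T\pi_1$ as $T^2\pi_1;\mu_A$.

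Once both sides are written over the common prefix $d_{T^2X}$, the claim reduces to an equality of maps out of $T^2X\otimes TX$, which I would derive from three ingredients: associativity of the action $\bullet$ (to re-bracket the iterated actions into a single $A$-action on $M$), the fact from Theorem~\ref{T-Alg-to-Alg} that $\nu$ is a homomorphism of commutative algebras (so that $m;\nu$ on $TA\otimes TA$ becomes $(\nu\otimes\nu)$ followed by the associated multiplication on $A$), and a final use of $T\nu;\nu=\mu_A;\nu$ to identify the two first-factor maps $T^2\pi_1;T\nu;\nu$ and $T^2\pi_1;\mu_A;\nu$. I expect the only genuine difficulty to be bookkeeping: keeping track of the three tensor factors produced by the chain rule and applying the symmetry and associativity isomorphisms consistently. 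Conceptually no deriving-transform equation beyond (d3) and (d4) is needed (naturality of $d$ aside), the remaining input being the monad laws, the naturality of the algebra modality, and the module axioms for $M$.
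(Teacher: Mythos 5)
Your proposal is correct and follows essentially the same route as the paper: both Eilenberg--Moore laws are checked componentwise against $\pi_1$ and $\pi_2$, and for the only substantive case, the $\pi_2$-component of associativity, you and the paper both expand $T\beta;\beta_2$ via naturality of $d$ and $\mu;\beta_2$ via the chain rule over the common prefix $d_{T^2(A\oplus M)}$, then compare the cofactors using that $T\pi_1$ and $\nu$ are algebra homomorphisms, the module axioms for $\bullet$, naturality of $\mu$, and the $T$-algebra law $T\nu;\nu=\mu;\nu$. Your inventory of needed ingredients (only (d3), (d4) and naturality of $d$ among the deriving-transform axioms) matches what the paper's four diagrams actually use, so no changes are needed.
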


\begin{prf}{} The following four diagrams capture all of the necessary equations.

$$\bfig
\morphism(0,0)|b|/{>}/<600,0>[T^2A`TA;Ta]
\morphism(0,0)|l|/{>}/<0,-400>[T^2A`TA;\mu]
\morphism(600,0)|r|/{>}/<0,-400>[TA`A;a]
\morphism(0,-400)|b|/{>}/<600,0>[TA`A;a]
\morphism(-600,-400)|b|/{>}/<600,0>[T(A\oplus M)`TA;T\pi_1]
\morphism(-600,500)|l|/{>}/<0,-900>[T^2(A\oplus M)`T(A\oplus M);\mu]
\morphism(-600,500)|l|/{>}/<600,-500>[T^2(A\oplus M)`T^2A;T^2\pi_1]
\morphism(-600,500)|r|/{>}/<1200,-500>[T^2(A\oplus M)`TA;T\beta_1]
\morphism(-600,500)|a|/{>}/<1200,0>[T^2(A\oplus M)`T(A\oplus M);T\beta]
\morphism(600,500)|r|/{>}/<0,-500>[T(A\oplus M)`TA;T\pi_1]
\morphism(600,500)|r|/{@{>}@/^3em/}/<0,-900>[T(A\oplus M)`A;\beta_1]
\morphism(-600,-400)|b|/{@{>}@/_2em/}/<1200,0>[T(A\oplus M)`A;\beta_1]

\morphism(1400,500)|l|/{>}/<0,-500>[A\oplus M`A;\pi_1]
\morphism(1400,500)|a|/{>}/<600,0>[A\oplus M`T(A\oplus M);\eta]
\morphism(1400,0)|a|/{>}/<600,0>[A`TA;\eta]
\morphism(2000,500)|r|/{>}/<0,-500>[T(A\oplus M)`TA;T\pi_1]
\morphism(1400,0)|r|/{>}/<300,-500>[A`A;1]
\morphism(2000,0)|r|/{>}/<-300,-500>[TA`A;a]
\morphism(1400,500)|l|/{@{>}@/_4em/}/<300,-1000>[A\oplus M`A;\pi_1]
\morphism(2000,500)|r|/{@{>}@/^4em/}/<-300,-1000>[T(A\oplus M)`A;\beta_1]

\morphism(-350,-100)|a|/{}/<0,0>[(nat \phantom{i} \mu)`;]
\morphism(260,-200)|a|/{}/<0,0>[(T\phantom{i}alg)`;]

\morphism(1700,300)|a|/{}/<0,0>[(nat\phantom{i} \eta)`;]
\efig$$

$$\bfig
\morphism(-100,0)|a|/{>}/<1300,0>[(T(A\oplus M))^{\otimes 2} \otimes(A\oplus M)`(TA)^{\otimes 2}\otimes M;T\pi_1 \otimes T\pi_1 \otimes \pi_2]
\morphism(1200,0)|a|/{>}/<750,0>[(TA)^{\otimes 2}\otimes M`A^{\otimes 2}\otimes M;a \otimes a \otimes 1]
\morphism(1950,0)|a|/{>}/<750,0>[A^{\otimes 2}\otimes M`A \otimes M;1 \otimes \bullet]
\morphism(-100,0)|l|/{>}/<0,-500>[(T(A\oplus M))^{\otimes 2} \otimes(A\oplus M)`T(A \oplus M)\otimes (A\oplus M);m_{T(A\oplus M)} \otimes 1]
\morphism(1200,0)|l|/{>}/<0,-500>[(TA)^{\otimes 2}\otimes M`TA \otimes M;m_{TA} \otimes 1]
\morphism(1950,0)|r|/{>}/<0,-500>[A^{\otimes 2} \otimes M`A \otimes M;m_A \otimes 1]
\morphism(2700,0)|r|/{>}/<0,-500>[A\otimes M`M;\bullet]
\morphism(-100,-500)|b|/{>}/<1300,0>[T(A \oplus M) \otimes (A \oplus M)`TA \otimes M;T\pi_1 \otimes \pi_2]
\morphism(1200,-500)|b|/{>}/<750,0>[TA \otimes M`A \otimes M;a\otimes 1]
\morphism(1950,-500)|b|/{>}/<750,0>[A \otimes M`M;\bullet]
\morphism(-950,-500)|a|/{>}/<850,0>[T(A\oplus M)`T(A\oplus M)\otimes (A\oplus M);d]
\morphism(-100,1300)|l|/{>}/<0,-1300>[T^2(A \oplus M) \otimes T(A \oplus M)`(T(A\oplus M))^{\otimes 2} \otimes (A \oplus M); \mu \otimes d]
\morphism(1200,900)|l|/{>}/<0,-900>[T^2A \otimes T(A\oplus M) \otimes (A \oplus M)`(TA)^{\otimes 2}\otimes M;\mu \otimes T\pi_1 \otimes \pi_2]
\morphism(-100,1300)|l|/{>}/<1300,-400>[T^2(A \oplus M) \otimes T(A \oplus M)`T^2A \otimes T(A\oplus M) \otimes (A \oplus M);T^2\pi_1 \otimes d]
\morphism(1950,500)|r|/{>}/<0,-500>[(TA)^{\otimes 2} \otimes M`A^{\otimes 2} \otimes M;a \otimes a \otimes 1]
\morphism(1200,900)|r|/{>}/<750,-400>[T^2A \otimes T(A\oplus M) \otimes (A \oplus M)`(TA)^{\otimes 2} \otimes M;Ta \otimes T\pi_1 \otimes \pi_2]
\morphism(-100,1300)|l|/{>}/<2800,-300>[T^2(A \oplus M) \otimes T(A \oplus M)`TA \otimes M;T\beta_1 \otimes \beta_2]
\morphism(-100,1300)|a|/{>}/<2500,0>[T^2(A \oplus M) \otimes T(A \oplus M)`T(A\oplus M) \otimes (A\oplus M);T\beta \otimes \beta]
\morphism(2400,1300)/{}/<0,0>[T(A\oplus M) \otimes (A\oplus M)`;]
\morphism(2700,1300)|l|/{>}/<0,-300>[`TA \otimes M;T\pi_1 \otimes \pi_2]
\morphism(2700,1000)|l|/{>}/<0,-1000>[TA \otimes M`A\otimes M;a \otimes 1]
\morphism(2700,1800)|l|/{>}/<0,-400>[T(A\oplus M)`;d]
\morphism(2700,1800)|l|/{@{>}@/^3em/}/<0,-2300>[T(A\oplus M)`M;\beta_2]
\morphism(-950,1800)|a|/{>}/<3650,0>[T^2(A\oplus M)`T(A\oplus M);T\beta]
\morphism(-950,1800)|l|/{>}/<0,-2300>[T^2(A \oplus M)`T(A \oplus M);\mu]
\morphism(-950,1800)|r|/{>}/<850,-500>[T^2(A \oplus M)`T^2(A \oplus M) \otimes T(A \oplus M);d]
\morphism(-950,-500)|b|/{@{>}@/_3em/}/<3650,0>[T(A\oplus M)`M;\beta]
\morphism(900,1600)/{}/<0,0>[(nat\phantom{i} d)`;]
\morphism(-500,800)/{}/<0,0>[(chain\phantom{i} rule)`;]
\morphism(400,-250)/{}/<0,0>[(alg\phantom{i}hom)`;]
\morphism(1600,-250)/{}/<0,0>[(alg\phantom{i}hom)`;]
\morphism(2460,-250)/{}/<0,0>[(act)`;]
\morphism(400,400)/{}/<0,0>[(nat\phantom{i} \mu)`;]
\morphism(1500,400)/{}/<0,0>[(T\phantom{i}alg)`;]
\efig$$

$$\bfig
\morphism(0,0)|a|/{>}/<1100,0>[T(A\oplus M) \otimes (A\oplus M)`TA \otimes M;T\pi_1 \otimes \pi_2]
\morphism(0,-500)|l|/{>}/<0,500>[k \otimes (A\oplus M)`T(A \oplus M) \otimes (A\oplus M);e\otimes 1]
\morphism(0,-500)|b|/{>}/<1100,0>[k\otimes (A\oplus M)`k\otimes (A\oplus M);1]
\morphism(1100,-500)|l|/{>}/<0,500>[k\otimes (A\oplus M)`TA \otimes M;e\otimes \pi_2]

\morphism(-900,0)|a|/{>}/<900,0>[T(A\oplus M)`T(A\oplus M) \otimes (A\oplus M);d]
\morphism(-900,-500)|l|/{>}/<0,500>[A\oplus M`T(A\oplus M);\eta]
\morphism(-900,-500)|a|/{>}/<900,0>[A\oplus M`k\otimes (A\oplus M);\cong]

\morphism(1100,0)|a|/{>}/<750,0>[TA \otimes M`A\otimes M;a\otimes 1]
\morphism(1100,-500)|b|/{>}/<750,0>[k\otimes (A\oplus M)`k\otimes (A\oplus M);1]
\morphism(1850,-500)|r|/{>}/<0,500>[k\otimes (A\oplus M)`A\otimes M;e\otimes \pi_2]

\morphism(1850,0)|a|/{>}/<600,0>[A\otimes M`M;\bullet]
\morphism(1850,-500)|a|/{>}/<600,0>[k\otimes (A\oplus M)`A\oplus M;\cong]
\morphism(2450,-500)|r|/{>}/<0,500>[A\oplus M`M;\pi_2]

\morphism(-900,0)|a|/{@{>}@/^4em/}/<3350,0>[T(A\oplus M)`M;\beta_2]
\morphism(-900,-500)|b|/{@{>}@/_4em/}/<3350,0>[A\oplus M`A\oplus M;1]

\morphism(425,-300)|a|/{}/<0,0>[(T\pi_1\phantom{i}alg\phantom{i}hom)`;]
\morphism(1500,-300)|a|/{}/<0,0>[(a\phantom{i}alg\phantom{i}hom)`;]
\efig$$
\end{prf}

\begin{defn}{\em
We denote this $T$-algebra by $W(A,M)=\langle A\oplus M,\beta^{AM}\rangle$.
}\end{defn}

The following result is straightforward. 

\begin{lem}
Let $(A,\nu)$ be a $T$-algebra. Let $M$ an $A$-module. Then $\pi_1\colon A\oplus M\rarr A$ is a map of $T$-algebras, where $A\oplus M$ is given the $T$-algebra structure just defined.
\end{lem}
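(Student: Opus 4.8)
The plan is to unwind the definition of a morphism of $T$-algebras and to observe that the required equation is literally the defining formula for the first component of $\beta^{AM}$. Recall that a map $f$ is a morphism of $T$-algebras from $(X,\xi)$ to $(Y,\zeta)$ precisely when $\xi;f = Tf;\zeta$, that is, when the square with sides $\xi$, $f$, $Tf$, $\zeta$ commutes. Instantiating this with $X = A\oplus M$, $\xi = \beta^{AM}$, $Y = A$, $\zeta = \nu$, and $f = \pi_1$, the statement to be verified is exactly
\[
\beta^{AM};\pi_1 = T\pi_1;\nu.
\]

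First I would use that $A\oplus M$ is a biproduct, so that the map $\beta^{AM}\colon T(A\oplus M)\rarr A\oplus M$ is, by construction, the pairing of its two components $\beta_1\colon T(A\oplus M)\rarr A$ and $\beta_2\colon T(A\oplus M)\rarr M$ into the two summands; in particular $\beta^{AM};\pi_1 = \beta_1$. Then I would simply recall the definition $\beta_1 = T\pi_1;\nu$ taken from the construction of $W(A,M)$ in the preceding theorem. Combining these two observations yields
\[
\beta^{AM};\pi_1 = \beta_1 = T\pi_1;\nu,
\]
which is precisely the desired commutativity.

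I expect no genuine obstacle here, which is why the lemma is flagged as \emph{straightforward}: the entire content has already been absorbed into the choice of first component $\beta_1$ made when defining $\beta^{AM}$, and the verification amounts to reading off the first coordinate of $\beta^{AM}$ relative to the biproduct projections. The only thing worth stating carefully is the convention that a map out of the biproduct $A\oplus M$ is determined by, and recovers, its components under $\pi_1$ and $\pi_2$, so that post-composing the pairing $\langle\beta_1,\beta_2\rangle$ with $\pi_1$ returns $\beta_1$ on the nose.
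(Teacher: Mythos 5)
Your proof is correct and is exactly the argument the paper has in mind: the paper omits the proof entirely, labelling the lemma straightforward, precisely because $\beta^{AM};\pi_1=\beta_1=T\pi_1;\nu$ is the defining formula for the first component of $\beta^{AM}$. Nothing further is needed.
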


We also note that the algebra associated to this $T$-algebra under the process of Theorem \ref{T-Alg-to-Alg}
coincides with the algebra structure associated to $A\oplus M$ in Remark
\ref{rem:derns_alg_maps_mon}.

\begin{prop}
Let $(A,\alpha)$ be a $T$-algebra in $\mathcal{C}$ and let $M$ be an $A$-module. Then the commutative algebra structure carried by the $T$-algebra $A \oplus M$ coincides with the commutative algebra structure on $A\oplus M$ described in Remark
\ref{rem:derns_alg_maps_mon}.   
\end{prop}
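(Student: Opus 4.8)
The plan is to use the biproduct structure of $A\oplus M$ to reduce the claim to a componentwise comparison. Since $A\oplus M$ is at once a product, two parallel morphisms into it are equal as soon as their composites with the projections $\pi_1$ and $\pi_2$ agree; and since the two structures in question are commutative algebra structures on the \emph{same} object, it suffices to check that their multiplication morphisms and their unit morphisms coincide. By Theorem \ref{T-Alg-to-Alg} the multiplication carried by $W(A,M)=\langle A\oplus M,\beta\rangle$ is $\tilde m := (\eta\ox\eta);m_{T(A\oplus M)};\beta$ and its unit is $\tilde e := e_{A\oplus M};\beta$, whereas Remark \ref{rem:derns_alg_maps_mon} (following \cite{BCPS}) presents the infinitesimal extension, whose multiplication has $\pi_1$-component $(\pi_1\ox\pi_1)$ followed by the multiplication of $A$, $\pi_2$-component $am'+a'm$, and unit $(e_A;\alpha,0)$. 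I would therefore compute $\tilde m;\pi_i$ and $\tilde e;\pi_i$ for $i=1,2$ and match them against these.

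The first component and the unit are quickly dispatched. Composing $\beta$ with $\pi_1$ selects $\beta_1 = T\pi_1;\alpha$, so $\tilde m;\pi_1 = (\eta\ox\eta);m_{T(A\oplus M)};T\pi_1;\alpha$; naturality of the algebra-modality multiplication rewrites $m_{T(A\oplus M)};T\pi_1$ as $(T\pi_1\ox T\pi_1);m_{TA}$, and then naturality of $\eta$ moves the projections to the front, yielding $(\pi_1\ox\pi_1);(\eta\ox\eta);m_{TA};\alpha$, which is exactly $(\pi_1\ox\pi_1)$ followed by the multiplication Theorem \ref{T-Alg-to-Alg} assigns to $A$. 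For the unit, naturality of $e$ gives $\tilde e;\pi_1 = e_{A\oplus M};T\pi_1;\alpha = e_A;\alpha$, the unit of $A$, while $\tilde e;\pi_2 = e_{A\oplus M};\beta_2 = 0$ because $\beta_2$ opens with $d$ and $e;d=0$ by (d1).

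The substance is the second component $\tilde m;\pi_2 = (\eta\ox\eta);m_{T(A\oplus M)};\beta_2$, and here the Leibniz rule (d2) is what unlocks the computation. Expanding $m_{T(A\oplus M)};d$ by (d2) breaks $\tilde m;\pi_2$ into two summands, and in each the subterm $\eta;d$ collapses to $e\ox 1$ by (d3). In the first summand the resulting constant factor $e$ is then absorbed by the unit law of the algebra $T(A\oplus M)$, leaving $\eta\ox 1$; pushing this through $T\pi_1\ox\pi_2$, through $\alpha\ox 1$ (using the $T$-algebra identity $\eta;\alpha=1$), and through the action $\bullet$ produces $(\pi_1\ox\pi_2);\bullet$, i.e.\ the term $am'$. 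The second summand runs identically, except that the symmetry $c$ of (d2) interchanges the two tensor factors before the multiplication, so that it yields instead $c;(\pi_1\ox\pi_2);\bullet$, the term $a'm$. Their sum is precisely the $\pi_2$-component of the infinitesimal extension, completing the comparison.

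I expect the one genuinely delicate step to be the bookkeeping in this last paragraph: one must apply the symmetry $c$ of (d2) with the correct orientation and confirm that the two invocations of the $T(A\oplus M)$-unit law, one in each summand, interact with (d3) so as to leave exactly the cross terms $am'$ and $a'm$ in the right order, rather than some other permutation of the factors. Everything else is an unwinding of the naturality of $m$, $e$ and $\eta$ together with the single $T$-algebra law $\eta;\alpha=1$.
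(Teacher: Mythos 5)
Your proposal is correct and follows essentially the same route as the paper: reduce to the two components via the projections, observe the first component and unit are immediate (the paper cites that $\pi_1$ is an algebra map where you unwind naturality of $m$ and $\eta$ directly), and compute the second component by expanding $m;d$ with the Leibniz rule (d2), collapsing $\eta;d$ to $e\otimes 1$ by (d3), and absorbing $e$ by the unit law to arrive at $(1+c);\pi_1\otimes\pi_2;\bullet$. Your explicit check of the units is a small addition the paper leaves implicit, but the substance of the argument is identical.
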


\begin{prf}{}
Since $\beta^{AM}$ is an algebra homomorphism the multiplication associated to $W(A,M)$ is
	\begin{displaymath}
	m_{W(A,M)} = \eta_{A \oplus M} \otimes \eta_{A \oplus M};m_{T(A \oplus M)};\beta^{AM}
	\end{displaymath}
Since $\pi_1:W(A,M) \to A$ is a $T$-homomorphism and hence an algebra homomorphism, $m_{W(A,M)};\pi_1 = \pi_1 \otimes \pi_1;m_A$ and so the first component of $m_{W(A,M)}$ is given as in Remark
\ref{rem:derns_alg_maps_mon}.

The second component is the composite 
	\begin{displaymath}
	\eta \otimes \eta;m_{T(A \oplus M)};d_{T(A \oplus M)};T\pi_1 \otimes \pi_2;\alpha \otimes 1;\bullet
	\end{displaymath}

\noindent Calculate as follows:

\begin{align}
	&\eta_{A\oplus M} \otimes \eta_{A\oplus M};m_{T(A\oplus M)};d_{T(A\oplus M)};T\pi_1 \otimes \pi_2;\alpha \otimes 1;\bullet \notag \\
	&= \eta_{A\oplus M} \otimes \eta_{A\oplus M};(1 \otimes d_{T(A\oplus M)} + c;1 \otimes d_{T(A \oplus M)});m_{T(A\oplus M)}\otimes 1;S\pi_1 \otimes \pi_2;\alpha \otimes 1;\bullet \notag \\
	&= (\eta_{A\oplus M} \otimes (\eta_{A\oplus M};d_{T(A\oplus M)}) + c;\eta_{A\oplus M} \otimes ( \eta_{A\oplus M};d_{A \oplus M}));(T\pi_1;\alpha) \otimes (T\pi_1;\alpha) \otimes \pi_2;m_A \otimes 1;\bullet \notag \\
	&= (1 \otimes (\eta_{A\oplus M};d_{A\oplus M}) + c;1 \otimes (\eta_{A\oplus M};d_{A \oplus M}));(\pi_1;\eta_A;\alpha) \otimes (T\pi_1;\alpha) \otimes \pi_2; m_A \otimes 1;\bullet \notag \\
	&= (1 + c);1 \otimes e_{A \oplus M} \otimes 1; \pi_1 \otimes (T\pi_1;\alpha) \otimes \pi_2;m_A \otimes 1;\bullet \notag \\
	&=(1 + c);\pi_1 \otimes \pi_2;1 \otimes e_A \otimes 1;m_A \otimes 1;\bullet \notag \\
	&= (1+c);\pi_1 \otimes \pi_2;\bullet \notag
\end{align}

\end{prf}

We will need the following technical lemmas concerning the $T$-algebra $W(A,M)$.

\begin{lem}\label{4.5}
Let $(A,a)$ be a $T$-algebra, and let $M$ and $N$ be $A$-modules. Suppose $h\colon M\rarr N$ is an $A$-module map.
Then $A\oplus h\colon A\oplus M\rarr A\oplus N$ is a $T$-algebra map $W(A,M)\rarr W(A,N)$.
\end{lem}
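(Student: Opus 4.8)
Let $(A,a)$ be a $T$-algebra, and let $M$ and $N$ be $A$-modules. Suppose $h\colon M\rarr N$ is an $A$-module map.
Then $A\oplus h\colon A\oplus M\rarr A\oplus N$ is a $T$-algebra map $W(A,M)\rarr W(A,N)$.

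The plan is to verify directly that the square expressing compatibility of $A \oplus h$ with the $T$-algebra structure maps commutes, that is, that
\[
T(A\oplus M)\xrightarrow{\;T(A\oplus h)\;}T(A\oplus N)\xrightarrow{\;\beta^{AN}\;}A\oplus N
\]
equals
\[
T(A\oplus M)\xrightarrow{\;\beta^{AM}\;}A\oplus M\xrightarrow{\;A\oplus h\;}A\oplus N.
\]
Since $A\oplus N = A\oplus N$ is a biproduct, I would test this equation by postcomposing with the two projections $\pi_1$ and $\pi_2$, reducing the problem to two component equations. The first component is immediate: in both composites the first projection factors through $\pi_1$, and since $A\oplus h$ acts as the identity on the $A$-summand, we get $\beta^{AN}_1\circ T(A\oplus h) = T(A\oplus h);T\pi_1;\nu = T\pi_1;\nu = \beta^{AM}_1$, using $(A\oplus h);\pi_1 = \pi_1$ and naturality of $T$ applied to $(A\oplus h);\pi_1=\pi_1;1_A$.

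The substantive part is the second component, which I would attack using naturality of the deriving transform $d$. Recall that $\beta_2$ is the composite $d;(T\pi_1\ox\pi_2);(\nu\ox 1);\bullet$. For the composite $T(A\oplus h);\beta^{AN}_2$, naturality of $d$ lets me slide $T(A\oplus h)$ across the deriving transform, producing $d;(T(A\oplus h)\ox(A\oplus h));(T\pi_1\ox\pi_2);(\nu\ox 1);\bullet_N$. The key computations are then that $(A\oplus h);\pi_1 = \pi_1$ on the first tensor factor (so the $TA$-part is unaffected), while on the second factor $(A\oplus h);\pi_2 = \pi_2;h$, pulling out a copy of $h$. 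This reduces the expression to $d;(T\pi_1\ox\pi_2);(\nu\ox 1);(1\ox h);\bullet_N$, and the claim that this equals $\beta^{AM}_2;h$ becomes precisely the statement that $h$ is an $A$-module map, i.e. that $(\nu\ox 1);\bullet_M;h = (\nu\ox 1);(1\ox h);\bullet_N$, equivalently $\bullet_M;h = (1\ox h);\bullet_N$.

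The main obstacle is bookkeeping rather than conceptual: one must track carefully how $T(A\oplus h)$, $T\pi_1$, $\pi_2$, and the various $A$-actions interleave across the tensor factors after invoking naturality of $d$. The only genuine input beyond naturality is the module-map condition on $h$, so once the diagram chase is set up correctly the equation falls out. I would also remark that this lemma is the $T$-algebra analogue of the classical observation, recorded in Remark \ref{rem:derns_alg_maps_mon}, that any $A$-module map $h\colon M\rarr N$ induces an algebra map $1\oplus h\colon A\oplus M\rarr A\oplus N$; here it is promoted to a morphism of $T$-algebras $W(A,M)\rarr W(A,N)$, and indeed one checks that $A\oplus h$ is automatically a map over $A$, since it commutes with the projections $\pi_1$ to $A$.
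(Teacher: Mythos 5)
Your proof is correct and follows essentially the same route as the paper's: the paper's two displayed diagrams are exactly your two component checks, with the first square using $(1_A\oplus h);\pi_1=\pi_1$ and functoriality of $T$, and the second using naturality of $d$, the projection computations, and the $A$-linearity of $h$ in the final square. No gaps.
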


\begin{prf}{} The result follows from the commutativity of the following two diagrams.

$$\bfig
\morphism(0,0)|a|/{>}/<1000,0>[T(A\oplus M)`T(A\oplus N);T(1_A \oplus h)]
\morphism(0,0)|l|/{>}/<0,-500>[`TA;T\pi_1]
\morphism(60,-500)|b|/{>}/<940,0>[`TA;1_{TA}]
\morphism(1000,0)|r|/{>}/<0,-440>[`;T\pi_1]
\morphism(0,-500)|l|/{>}/<0,-500>[`A;a]
\morphism(1000,-500)|r|/{>}/<0,-500>[`A;a]
\morphism(40,-1000)|b|/{>}/<920,0>[`;1_A]
\morphism(-10,-30)|l|/{@{>}@/_3em/}/<0,-930>[`;\beta_1^{AM}]
\morphism(1030,-30)|r|/{@{>}@/^3em/}/<0,-930>[`;\beta_1^{AN}]
\efig$$

$$\bfig
\morphism(0,0)|a|/{>}/<1700,0>[T(A\oplus M)`T(A\oplus N);T(1_A \oplus h)]
\morphism(0,0)|l|/{>}/<0,-600>[`T(A\oplus M) \otimes (A\oplus M);d]
\morphism(1700,0)|r|/{>}/<0,-600>[`T(A\oplus N) \otimes (A\oplus N);d]
\morphism(450,-600)|a|/{>}/<800,0>[`;T(1_A \oplus h) \otimes (1_A \oplus h)]
\morphism(0,-600)|l|/{>}/<0,-300>[`TA \otimes M;T\pi_1 \otimes \pi_2]
\morphism(1700,-600)|r|/{>}/<0,-300>[`TA \otimes N;T\pi_1 \otimes \pi_2]
\morphism(200,-900)|a|/{>}/<1300,0>[`;1_{TA} \otimes h]
\morphism(0,-900)|l|/{>}/<0,-300>[`A \otimes M;a \otimes 1_M]
\morphism(1700,-900)|r|/{>}/<0,-300>[`A \otimes N;a \otimes 1_N]
\morphism(150,-1200)|a|/{>}/<1400,0>[`;1_A \otimes h]
\morphism(0,-1200)|l|/{>}/<0,-300>[`M;\bullet_M]
\morphism(1700,-1200)|r|/{>}/<0,-300>[` N;\bullet_N]
\morphism(60,-1500)|b|/{>}/<1590,0>[`;h]
\morphism(-30,-50)|l|/{@{>}@/_6em/}/<0,-1400>[`;\beta_2^{AM}]
\morphism(1730,-50)|r|/{@{>}@/^6em/}/<0,-1400>[`;\beta_2^{AN}]
\efig$$

\end{prf}




The above calculations allow us to conclude:

\begin{prop}
Given a $T$-algebra $A$, the above construction defines a functor:

\[W(A,-)\colon A\mbox{-Mod}\longrightarrow \cC^T/A\]

Here, $\cC^T$ is the category of $T$-algebras and $\cC^T/A$ is the slice category over $A$. 

\end{prop}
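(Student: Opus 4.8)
The plan is to observe that essentially all of the substantive content has already been established in the preceding results, and simply to assemble it into a functor. The assignment is $M \mapsto (W(A,M),\pi_1)$ on objects and $h \mapsto 1_A \oplus h$ on morphisms, so the task reduces to checking three things: (i) that each $(W(A,M),\pi_1)$ is a well-defined object of $\cC^T/A$, (ii) that each $1_A \oplus h$ is a well-defined morphism of the slice, and (iii) that identities and composites are preserved.

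For (i): given an $A$-module $M$, the pair $\langle A\oplus M,\beta^{AM}\rangle$ is a $T$-algebra by the theorem constructing $W(A,M)$, and by the lemma stating that $\pi_1\colon A\oplus M\rarr A$ is a map of $T$-algebras, the projection $\pi_1$ is indeed a morphism of $T$-algebras; hence $(W(A,M),\pi_1)$ is an object of $\cC^T/A$. For (ii): Lemma \ref{4.5} already shows that for an $A$-module map $h\colon M\rarr N$ the morphism $1_A\oplus h$ is a map of $T$-algebras $W(A,M)\rarr W(A,N)$. It remains to check that it lies over $A$, that is, that $(1_A\oplus h);\pi_1 = \pi_1$; I would deduce this directly from the defining equations of the biproduct, since the first coordinate of $1_A\oplus h$ is $1_A$, giving $(1_A\oplus h);\pi_1 = \pi_1;1_A = \pi_1$.

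For (iii): preservation of identities, $1_A\oplus 1_M = 1_{A\oplus M}$, and of composites, $(1_A\oplus h);(1_A\oplus g) = 1_A\oplus(h;g)$ for composable module maps, are immediate from the bifunctoriality of $\oplus\colon\cC\times\cC\rarr\cC$. These equalities are computed at the level of underlying morphisms in $\cC$, but since the forgetful functors $\cC^T\rarr\cC$ and $\cC^T/A\rarr\cC^T$ are faithful, they transport to the slice. The main obstacle, such as it is, has been entirely absorbed into the two cited lemmas --- namely that $\beta^{AM}$ is a genuine $T$-algebra structure and that $1_A\oplus h$ is a $T$-algebra map; once these are granted, the functoriality is a purely formal consequence of the bifunctoriality of the biproduct, so no real difficulty remains.
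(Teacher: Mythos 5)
Your proposal is correct and follows essentially the same route as the paper, which simply asserts the proposition as a consequence of the preceding theorem on $W(A,M)$, the lemma that $\pi_1$ is a $T$-algebra map, and Lemma \ref{4.5}. You have merely made explicit the routine verifications (compatibility with $\pi_1$ and preservation of identities and composites via bifunctoriality of $\oplus$) that the paper leaves implicit.
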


It is the above series of observations that allows us to define a generalized notion of derivation depending on the given codifferential structure of $\cC$.

\begin{defn}{\em

\begin{itemize}
\item Let $(A,\nu)$ be a $T$-algebra. Let $M$ be an $A$-module. A {\em Beck $T$-derivation} for $A$ 
valued in $M$ is a $T$-algebra map

\[A -\!\!\!\!\!-\!\!\!\!\!-\!\!\!\longrightarrow W(A,M) \,\,\,\,\,\,\,\mbox{in $\cC^T/A$}\]

\noindent  in the slice category $\cC^T/A$.

\item A {\em $T$-derivation} is a morphism $\del\colon A\rarr M$ such that

\[\langle 1,\del\rangle\colon A-\!\!\!\!\!-\!\!\!\!\!-\!\!\!\longrightarrow A\oplus M\]

is a $T$-algebra homomorphism $A\rarr W(A,M)$.

\end{itemize}
}\end{defn}

\begin{rem}\label{SDer=Der}{\em 
Under the assumptions of Remark 3.1, suppose we are given $A \in \mathcal{C}^S$  where $S$ is the
symmetric algebra monad and $M \in A-Mod$. Then a morphism $\partial\colon A \to M$ in $\mathcal{C}$ is an $S$-derivation if and only if $\partial$ is a derivation.}
\end{rem}

\begin{rem} {\em Evidently, the two notions of {\it Beck $T$-derivation} and {\em $T$-derivation} are in bijective 
correspondence and we will use the two interchangeably. }
\end{rem}

We now give several equations for a map $\del\colon A\rarr M$ which are equivalent to $\del$ being a
$T$-derivation. 

\begin{prop} Let $(A,\nu)$ be a $T$-algebra, and let $M$ be an $A$-module. A morphism $\del\colon A\rarr M$ is a $T$-derivation
if and only if the following diagram commutes.

$$\bfig
\morphism(0,0)|a|/{>}/<700,0>[TA`T(A\oplus M);T\begin{pmatrix} 1_A \\ \partial \end{pmatrix}]
\morphism(0,0)|l|/{>}/<0,-300>[`A;\nu]
\morphism(700,0)|r|/{>}/<0,-300>[`M;\beta_2]
\morphism(20,-300)|b|/{>}/<660,0>[`;\partial]
\efig$$
\end{prop}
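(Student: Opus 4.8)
The plan is to unpack the definition of $T$-derivation and observe that the defining $T$-algebra morphism condition splits, along the biproduct $A \oplus M$, into two components, one of which turns out to be automatic. By definition, $\partial$ is a $T$-derivation precisely when $\langle 1_A, \partial\rangle\colon A \to A \oplus M$ is a morphism of $T$-algebras from $(A,\nu)$ to $W(A,M) = \langle A \oplus M, \beta^{AM}\rangle$. Writing $\beta = \beta^{AM}$ with components $\beta_1$ and $\beta_2$ as in the theorem defining $W(A,M)$, this says that
\[
T\langle 1_A, \partial\rangle; \beta = \nu; \langle 1_A, \partial\rangle\;.
\]
Since $A \oplus M$ is a biproduct, two morphisms into it agree if and only if they agree after postcomposition with each projection $\pi_1$ and $\pi_2$. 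I would therefore test this square against $\pi_1$ and $\pi_2$ in turn.

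The main point is that the $\pi_1$-component holds unconditionally, so it contributes no information. Postcomposing the right-hand side with $\pi_1$ gives $\nu; \langle 1_A, \partial\rangle; \pi_1 = \nu; 1_A = \nu$, while postcomposing the left-hand side with $\pi_1$ gives $T\langle 1_A, \partial\rangle; \beta_1 = T\langle 1_A, \partial\rangle; T\pi_1; \nu = T(\langle 1_A, \partial\rangle; \pi_1); \nu = T(1_A); \nu = \nu$, using that $\beta_1 = T\pi_1; \nu$, functoriality of $T$, and $\langle 1_A, \partial\rangle; \pi_1 = 1_A$. Hence both sides equal $\nu$ for \emph{any} $\partial$ whatsoever, and the $\pi_1$-component carries no constraint.

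It remains to treat the $\pi_2$-component. Postcomposing the right-hand side with $\pi_2$ yields $\nu; \langle 1_A, \partial\rangle; \pi_2 = \nu; \partial$, and the left-hand side yields $T\langle 1_A, \partial\rangle; \beta_2$. These agree exactly when $T\langle 1_A, \partial\rangle; \beta_2 = \nu; \partial$, which is precisely the commutativity asserted by the displayed diagram of the proposition. Combining the two components, the full $T$-algebra morphism condition reduces to the single $\beta_2$-equation, giving the claimed equivalence. There is no genuine obstacle here: the only substantive observation is that the $\beta_1$ (equivalently $\pi_1$) condition is forced by functoriality of $T$ together with the splitting $\langle 1_A, \partial\rangle; \pi_1 = 1_A$, so the entire $T$-derivation condition collapses to the single equation displayed in the statement.
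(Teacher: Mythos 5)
Your proposal is correct and follows essentially the same route as the paper: both decompose the $T$-algebra homomorphism condition for $\langle 1_A,\partial\rangle$ along the two projections of $A\oplus M$, observe that the $\pi_1$-component holds automatically because $T\langle 1_A,\partial\rangle;T\pi_1;\nu=\nu$, and identify the $\pi_2$-component with the displayed diagram. No further comment is needed.
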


\begin{prf}{} 

Since $A\oplus M$ is a product, the requirement that $\langle 1_A,\del\rangle\colon A\rarr A\oplus M$
be a $T$-algebra homomorphism amounts to two equations, the second of which is expressed by the
above diagram whereas the first commutes by the following calculation

$$\bfig
\morphism(0,0)|a|/{>}/<700,0>[TA`T(A\oplus M);T\begin{pmatrix} 1_A \\ \partial \end{pmatrix}]
\morphism(0,0)|l|/{>}/<0,-550>[`A;\nu]
\morphism(700,0)|r|/{>}/<0,-300>[`TA;T\pi_1]
\morphism(700,-300)|r|/{>}/<0,-250>[`A;\nu]
\morphism(50,-40)|b|/{>}/<580,-250>[`;1_{TA}]
\morphism(50,-550)|b|/{>}/<600,0>[`;1_A]
\efig$$

\end{prf}

\begin{prop}\label{prop:chn_rule_charn_of_tderns} 
Let $(A,\nu)$ be a $T$-algebra, and let $M$ be an $A$-module. A morphism $\del\colon A\rarr M$ is a $T$-derivation
if and only if

$$\bfig
\morphism(0,0)|b|/{>}/<625,0>[TA \otimes A`A\otimes M;\nu \otimes \partial]
\morphism(750,0)|b|/{>}/<440,0>[`M;\bullet_M]
\morphism(0,300)|l|/{>}/<0,-250>[TA`;d]
\morphism(1190,300)|r|/{>}/<0,-250>[A`;\partial]
\morphism(70,300)|a|/{>}/<1090,0>[`;\nu]
\efig$$

\noindent commutes.

\end{prop}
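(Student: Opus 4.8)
The plan is to reduce the desired characterization to the one established in the immediately preceding proposition, which states that $\partial$ is a $T$-derivation if and only if $T\langle 1_A,\partial\rangle;\beta_2 = \nu;\partial$. Since the triangle displayed in the present statement asserts exactly that $d;(\nu\otimes\partial);\bullet_M = \nu;\partial$, it suffices to prove that the two left-hand composites coincide, namely
$$T\langle 1_A,\partial\rangle;\beta_2 \;=\; d;(\nu\otimes\partial);\bullet_M\;.$$
Once this identity is in hand, the two stated conditions share the same right-hand side $\nu;\partial$, and the equivalence is immediate.

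First I would unfold the definition of $\beta_2$, namely $\beta_2 = d;(T\pi_1\otimes\pi_2);(\nu\otimes 1);\bullet$, so that the left-hand composite becomes $T\langle 1_A,\partial\rangle;d;(T\pi_1\otimes\pi_2);(\nu\otimes 1);\bullet$. The key step is to apply naturality of the deriving transform $d$ along the morphism $\langle 1_A,\partial\rangle\colon A\rarr A\oplus M$, which gives
$$T\langle 1_A,\partial\rangle;d \;=\; d;\bigl(T\langle 1_A,\partial\rangle\otimes\langle 1_A,\partial\rangle\bigr)\;.$$
(Here the two occurrences of $d$ are its instances at $TA$ and at $T(A\oplus M)$ respectively.)

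Next I would simplify the composite tensor factor componentwise. On the first tensor slot, functoriality of $T$ together with $\langle 1_A,\partial\rangle;\pi_1 = 1_A$ yields $T\langle 1_A,\partial\rangle;T\pi_1 = T(1_A) = 1_{TA}$; on the second slot, $\langle 1_A,\partial\rangle;\pi_2 = \partial$. Hence $\bigl(T\langle 1_A,\partial\rangle\otimes\langle 1_A,\partial\rangle\bigr);(T\pi_1\otimes\pi_2) = 1_{TA}\otimes\partial$, and absorbing the subsequent $\nu\otimes 1$ collapses the whole composite to $d;(\nu\otimes\partial);\bullet$, which is precisely the clockwise leg of the target triangle. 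This establishes the displayed identity and completes the equivalence.

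I expect no substantial obstacle: the argument is essentially a single application of naturality of $d$ followed by routine bookkeeping, exploiting that $\langle 1_A,\partial\rangle$ is a section of $\pi_1$ whose second component is $\partial$. The only point requiring care is the correct threading of the two tensor slots when naturality is invoked and when the projections are subsequently absorbed, so that the first slot reduces to the identity on $TA$ and the second to $\partial$.
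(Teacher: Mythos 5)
Your proposal is correct and matches the paper's own argument: the paper's proof is exactly the commutative diagram expressing $T\langle 1_A,\partial\rangle;\beta_2 = d;(\nu\otimes\partial);\bullet_M$ via naturality of $d$ along $\langle 1_A,\partial\rangle$ and collapsing the projections, followed by an appeal to the preceding proposition. No issues.
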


\begin{prf}{} Calculate as follows:

$$\bfig
\morphism(0,0)|b|/{>}/<900,0>[T(A\oplus M)`T(A\oplus M) \otimes (A\oplus M);d]
\morphism(900,0)|b|/{>}/<1000,0>[T(A\oplus M) \otimes (A\oplus M)`TA \otimes M;T\pi_1 \otimes \pi_2]
\morphism(1900,0)|b|/{>}/<600,0>[TA \otimes M`A\otimes M;\nu \otimes 1_M]
\morphism(2500,0)|a|/{>}/<400,0>[A\otimes M`M;\bullet_M]
\morphism(0,600)|l|/{>}/<0,-600>[TA`T(A\oplus M);T\begin{pmatrix} 1_A \\ \partial \end{pmatrix}]
\morphism(0,600)|a|/{>}/<900,0>[TA`TA \otimes A;d]
\morphism(900,600)|l|/{>}/<0,-550>[TA \otimes A`;T\begin{pmatrix} 1_A \\ \partial \end{pmatrix} \otimes \begin{pmatrix} 1_A \\ \partial \end{pmatrix}]
\morphism(900,600)|l|/{>}/<1000,-600>[TA\otimes A`TA\otimes M;1_{TA} \otimes \partial]
\morphism(900,600)|r|/{>}/<1600,-600>[TA\otimes A`A\otimes M;\nu \otimes \partial]
\morphism(0,0)|b|/{@{>}@/_2.5em/}/<2900,0>[T(A\oplus M)`M;\beta_2]
\efig$$

Thus the result follows from the previous proposition.

\end{prf}

Whereas we have defined the notion of $T$-derivation in the setting of a given codifferential category, Theorem \ref{prop:chn_rule_charn_of_tderns} furnishes an equivalent definition that is applicable more generally, as follows.

\begin{defn}\label{def:generalized_defn_tdern}{\em
Let $\cC$ be a symmetric monoidal category equipped with an algebra modality $T$ and arbitrary morphisms $d_{TC}:TC \rightarrow TC \otimes C$ $(C \in \cC)$.  Given a $T$-algebra $A$ and an $A$-module $M$, a \textit{$T$-derivation} is a morphism $\del:A \rightarrow M$ such that the diagram of Proposition \ref{prop:chn_rule_charn_of_tderns} commutes.
}
\end{defn}

The new understanding of derivations captured by the above propositions allows us, among other things, to 
reexamine the definition of (co)differential categories, as seen by the following:

\smallskip

\begin{thm}\label{thm:chn_rule_iff_d_dern}
Let $\cC$ be a symmetric monoidal category equipped with an algebra modality $T$ and arbitrary morphisms $d_{TC}:TC \rightarrow TC \otimes C$ $(C \in \cC)$.  The Chain Rule equation for $d$ in the definition of 
codifferential category is equivalent to the statement that each component $d_{TC}$ is a $T$-derivation, where $TC \ox C$ is viewed as the 
free $TC$-module generated by $C$. 
\end{thm}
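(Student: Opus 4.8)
The plan is to observe that the statement reduces, essentially without computation, to a direct comparison of two equations once the general characterization of $T$-derivations is instantiated at the appropriate data. Specifically, I would apply Proposition~\ref{prop:chn_rule_charn_of_tderns} (equivalently, Definition~\ref{def:generalized_defn_tdern}) to the \emph{free} $T$-algebra $(TC,\mu_C)$ and to the free $TC$-module $TC\ox C$, with $\del$ taken to be the component $d_{TC}$ itself.

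First I would record the instantiations. Taking $A=(TC,\mu_C)$ gives $\nu=\mu_C$ and $TA=T^2C$, so the deriving transform at the object $A=TC$ is the component $d_{T^2C}\colon T^2C\rarr T^2C\ox TC$. The module $M=TC\ox C$ is the free $TC$-module on $C$, whose action $\bullet_M\colon TC\ox(TC\ox C)\rarr TC\ox C$ is $m_{TC}\ox 1_C$. With $\del=d_{TC}$, the two legs in the diagram of Proposition~\ref{prop:chn_rule_charn_of_tderns} become, respectively, the composite $d_{T^2C};(\mu_C\ox d_{TC});(m_{TC}\ox 1_C)$ and the composite $\mu_C;d_{TC}$.

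I would then simply note that commutativity of that diagram, namely the equation $d_{T^2C};(\mu_C\ox d_{TC});(m_{TC}\ox 1_C)=\mu_C;d_{TC}$, is \emph{verbatim} the Chain Rule equation (d4) of Definition~\ref{def:codiff_cat} at the object $C$. Since both sides of the asserted equivalence are quantified over all objects $C$, establishing this identification for a fixed but arbitrary $C$ proves the theorem: each $d_{TC}$ is a $T$-derivation (for every $C$) precisely when (d4) holds (for every $C$).

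The work is entirely in the bookkeeping, so the only real obstacle is verifying that every instantiated arrow matches up. The one point needing care is the identification of the module action $\bullet_M$ with $m_{TC}\ox 1_C$, and the consequent identification of the $\bullet_M$-leg with the factor $m\ox 1$ appearing in (d4). Once the strictness convention of Definition~\ref{def:codiff_cat} is used to suppress the associativity coherence isomorphisms, the two equations coincide on the nose, and no further computation is required.
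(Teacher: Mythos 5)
Your proposal is correct and is essentially identical to the paper's own proof: the paper likewise instantiates the $T$-derivation condition of Proposition~\ref{prop:chn_rule_charn_of_tderns} at the free $T$-algebra $(TC,\mu_C)$ and the free $TC$-module $TC\ox C$ with action $m_{TC}\ox 1_C$, and observes that the resulting square $\mu_C;d_{TC}=d_{T^2C};(\mu_C\ox d_{TC});(m_{TC}\ox 1_C)$ is verbatim axiom (d4). Your explicit bookkeeping of the instantiations is, if anything, slightly more detailed than the paper's one-line justification.
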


\begin{prf}{} 

$$\bfig
\morphism(0,0)|b|/{>}/<900,0>[T^2C \otimes TC`TC\otimes TC \otimes C;\mu \otimes d_{TC}]
\morphism(1200,0)|b|/{>}/<800,0>[`TC \otimes C;m_{TC}\otimes 1_C]
\morphism(0,300)|l|/{>}/<0,-240>[T^2C`;d_{T^2C}]
\morphism(2020,300)|r|/{>}/<0,-240>[TC`;d_{TC}]
\morphism(70,300)|a|/{>}/<1900,0>[`;\mu]
\efig$$

This equation is both the chain rule and the statement that $d_{TC}$ is a derivation.

\end{prf}

\subsection{Universal Beck $T$-derivations}

\begin{defn}{\em Given a $T$-algebra $A$, a {\em module of K\"ahler $T$-differentials} is an $A$-module, denoted 
$\Omega^T_A$, equipped with a universal $T$-derivation on $A$. This can be expressed in either of the following
two equivalent ways:

\begin{itemize}
\item A $T$-derivation $d\colon A\rarr \Omega^T_A$ such that for all $T$-derivations $\del\colon A\rarr M$,
there is a unique $A$-linear map $\hat{\del}\colon \Omega^T_A\rarr M$ such that $d;\hat{\del}=\del$.
\item A morphism $g\colon A\rarr W(A,\Omega^T_A)$ in $\cC^T/A$ such that for each map 
$\del\colon A\rarr W(A,M)$ in $\cC^T/A$, there is a unique $A$-linear homomorphism $\hat{\del}\colon\Omega^T_A\rarr M$
such that $g;W(A,\hat{\del})=\del$. 
\end{itemize}
}\end{defn}

We now explore the existence of universal derivations from this new $T$-perspective.

\begin{thm} Let \cC\ be a codifferential category, and let $C$ be an object of \cC. Then $d_{TC}\colon TC\rarr T(C)\ox C$ is a universal $T$-derivation.
\end{thm}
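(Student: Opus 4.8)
The plan is to express the asserted universal property as the statement that a single hom-set map is a bijection, and then to prove that bijectivity by factoring it through two more familiar free-object adjunctions. Fix $C$ and write $A=TC$ for the free $T$-algebra $(TC,\mu_C)$, whose associated commutative algebra is the original $(TC,m,e)$ by Theorem \ref{T-Alg-to-Alg}. For an $A$-module $M$, the content of ``$d_{TC}$ is a universal $T$-derivation'' is exactly that the map
$$\Phi\colon A\text{-Mod}(TC\ox C,\,M)\longrightarrow T\text{-Der}(TC,M),\qquad \hat\partial\longmapsto d_{TC};\hat\partial$$
is a bijection, where $T\text{-Der}(TC,M)$ denotes the set of $T$-derivations $TC\rarr M$ (existence of $\hat\partial$ is surjectivity, uniqueness is injectivity).

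First I would check that $\Phi$ is well defined. Since $\cC$ is a codifferential category, axiom (d4) holds, so by Theorem \ref{thm:chn_rule_iff_d_dern} the deriving transform $d_{TC}$ is itself a $T$-derivation valued in the free $TC$-module $TC\ox C$. Next, post-composing a $T$-derivation with a module map again gives a $T$-derivation: if $h\colon M\rarr N$ is $A$-linear then $1_A\oplus h\colon W(A,M)\rarr W(A,N)$ is a map in $\cC^T/A$ by Lemma \ref{4.5}, and composing $\langle 1_A,\partial\rangle$ with it yields $\langle 1_A,\partial;h\rangle$, so $\partial;h$ is a $T$-derivation. Taking $\partial=d_{TC}$ and $h=\hat\partial$ shows $d_{TC};\hat\partial$ is a $T$-derivation, so $\Phi$ lands where claimed.

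The key step is to place $\Phi$ in a commuting triangle whose other two edges are bijections. On the module side, since $TC\ox C$ is the free $TC$-module on $C$ with insertion $u_C=e\ox 1$, restriction $\hat\partial\mapsto u_C;\hat\partial$ is a bijection $A\text{-Mod}(TC\ox C,M)\cong\cC(C,M)$. On the derivation side, since $(TC,\mu_C)$ is the \emph{free} $T$-algebra on $C$, the free/forgetful adjunction together with the biproduct decomposition of $W(TC,M)$ and the slice condition identifies $T$-derivations $TC\rarr M$ with morphisms $C\rarr M$ via $\partial\mapsto\eta_C;\partial$. Finally, axiom (d3) gives $\eta_C;d_{TC}=e\ox 1=u_C$, so for every module map $\hat\partial$ one has $\eta_C;\Phi(\hat\partial)=\eta_C;d_{TC};\hat\partial=u_C;\hat\partial$; that is, $(\eta_C;-)\circ\Phi=(u_C;-)$ as maps into $\cC(C,M)$. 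Since both $(\eta_C;-)$ and $(u_C;-)$ are bijections, $\Phi=(\eta_C;-)^{-1}\circ(u_C;-)$ is a bijection, which is precisely the universal property.

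The step I expect to require the most care is the identification $T\text{-Der}(TC,M)\cong\cC(C,M)$ out of the free $T$-algebra: one must verify that the free-$T$-algebra adjunction is compatible with the slice structure over $TC$, so that the identity first component required of a map in $\cC^T/TC$ transposes exactly to the forced first component $\eta_C$ on generators, leaving the second component $\eta_C;\partial\colon C\rarr M$ as the free datum. Everything else is a direct appeal to axioms (d3)--(d4), Lemma \ref{4.5}, the freeness of $TC\ox C$ as a $TC$-module, and the two-out-of-three property for a commuting triangle.
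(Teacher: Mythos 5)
Your proposal is correct and takes essentially the same route as the paper's proof: both reduce the universal property to the freeness of $TC\ox C$ as a $TC$-module and of $TC$ as a $T$-algebra, glued together by axiom (d3) ($\eta_C;d_{TC}=u_C$), with the chain rule (d4) supplying that $d_{TC}$ is itself a $T$-derivation. The paper packages your ``two-out-of-three for a commuting triangle of hom-sets'' as the statement that the corresponding diagram of $T$-algebra homomorphisms $TC\rarr W(TC,M)$ commutes if and only if it commutes when preceded by $\eta_C$, which is exactly your injectivity of $(\eta_C;-)$ on $T$-derivations.
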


\begin{prf}{}

Since $d_{TC}$ satisfies the chain rule, it is a $T$-derivation. Since $T(C) \otimes C$ is the free $T(C)$-module on $C$, given any $T$-derivation $\partial:T(C) \to M$ there exists a unique $T(C)$-linear morphism $\partial^\#:TC \otimes C \to M$ such that $u_C^{T(C)};\partial^\# = \eta_C;\partial$. Hence by axiom (d3), the two morphisms from $C$ to $M$ in the following diagram are equal:

$$\bfig
\morphism(0,0)|a|/{>}/<600,0>[T(C)`T(C) \otimes C;d_{TC}]
\morphism(600,0)|r|/{>}/<0,-400>[T(C) \otimes C`M;\partial^\#]
\morphism(0,0)|b|/{>}/<600,-400>[T(C)`M;\partial]
\morphism(-500,0)|a|/{>}/<500,0>[C`T(C);\eta_C]
\morphism(-500,0)/{}/<0,0>[C`;]
\efig$$

Equivalently,

$$\bfig
\morphism(0,0)|a|/{>}/<800,0>[TC`W(TC,TC \otimes C);\begin{pmatrix} 1 \\ d_{TC} \end{pmatrix}]
\morphism(800,0)|r|/{>}/<0,-400>[W(TC,TC\otimes C)`W(TC,M);W(TC,\partial^\#)]
\morphism(0,0)|b|/{>}/<800,-400>[TC`W(TC,M);\begin{pmatrix} 1 \\ \partial \end{pmatrix}]
\efig$$

\bigskip

\noindent commutes when preceded by $\eta_{C}$. Since this is a diagram of $T$-algebra homomorphisms, it commutes if and only if it commutes when preceded by $\eta_C$

\end{prf}

We now address the issue of extending the existence of universal $T$-derivations to arbitrary $T$-algebras. 

\begin{prop}\label{4.16} Let $(A,a)$ and $(B,b)$ be $T$-algebras and $M$ a $B$-module. Let $g\colon A\rarr B$ be a $T$-algebra homomorphism.
Then $g\oplus M\colon A\oplus M\rarr B\oplus M$ is a map of $T$-algebras $W(A,M_A)\rarr W(B,M)$, where $M_A$ is $M$ with evident induced action of $A$.
\end{prop}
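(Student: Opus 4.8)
The plan is to exploit the fact that the codomain $B \oplus M$ of $g \oplus M$ is a biproduct. As in the proof of the characterization of $T$-derivations above, verifying that $g \oplus M$ is a map of $T$-algebras $W(A,M_A) \to W(B,M)$ amounts to checking that
$$T(g \oplus M);\beta^{B,M} = \beta^{A,M_A};(g \oplus M)$$
commutes after post-composing with each of the two projections of $B \oplus M$. This yields one equation landing in $B$ and one landing in $M$, which I will treat separately, writing $\pi_1^A,\pi_2^A$ and $\pi_1^B,\pi_2^B$ for the projections of $A \oplus M$ and $B \oplus M$ respectively, and recording the standing identities $(g \oplus M);\pi_1^B = \pi_1^A;g$ and $(g \oplus M);\pi_2^B = \pi_2^A$.

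The first equation, landing in $B$, is essentially formal. Here I would show $T(g \oplus M);\beta_1^{B,M} = \beta_1^{A,M_A};g$. Using functoriality of $T$ together with $(g \oplus M);\pi_1^B = \pi_1^A;g$, the left-hand composite $T(g \oplus M);T\pi_1^B;b$ becomes $T\pi_1^A;Tg;b$, and the claim collapses to $T\pi_1^A;Tg;b = T\pi_1^A;a;g$, which follows at once from the hypothesis that $g$ is a $T$-algebra homomorphism, i.e. $Tg;b = a;g$.

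The substantive step is the second equation, landing in $M$, where I must show $T(g \oplus M);\beta_2^{B,M} = \beta_2^{A,M_A}$. I would first invoke naturality of the deriving transform $d$ to rewrite $T(g \oplus M);d_{T(B \oplus M)}$ as $d_{T(A \oplus M)};(T(g \oplus M) \otimes (g \oplus M))$. Next I would push this tensor factor through $T\pi_1^B \otimes \pi_2^B$ using the two compatibility identities above, producing $(T\pi_1^A;Tg) \otimes \pi_2^A$. A second application of $Tg;b = a;g$ then converts the stage $b \otimes 1$ into $a \otimes 1$ preceded by $g \otimes 1$, and finally the definition of the restriction-of-scalars action from Remark \ref{rem:derns_alg_maps_mon}, namely $\bullet_{M_A} = (g \otimes 1);\bullet_M$, absorbs this $g \otimes 1$, leaving exactly the defining composite for $\beta_2^{A,M_A}$. (Note that $M_A$ is well defined because, by Theorem \ref{T-Alg-to-Alg}, the $T$-homomorphism $g$ is in particular a homomorphism of the associated commutative algebras.)

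The main obstacle is purely bookkeeping within this second equation: keeping careful track of which projection carries the index $A$ versus $B$, and arranging that the naturality square for $d$ and the two uses of the identity $Tg;b = a;g$ are applied at the correct stages, so that the factor $g \otimes 1$ that is generated is precisely the one absorbed by the induced action $\bullet_{M_A}$. No structural input beyond naturality of $d$, functoriality of $T$, the hypothesis on $g$, and the definition of restriction of scalars is needed.
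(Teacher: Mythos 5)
Your proposal is correct and follows essentially the same route as the paper: the paper's proof consists of exactly the two component diagrams you describe, with the first component reduced to $Tg;b=a;g$ via functoriality of $T$ and the projection identity, and the second component handled by naturality of $d$, the projection identities, the $T$-homomorphism hypothesis on $g$, and the absorption of the resulting $g\otimes 1$ into the restriction-of-scalars action $\bullet_{M_A}=(g\otimes 1);\bullet_M$. No gaps.
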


\begin{prf}{}
The result follows from the commutativity of the following two diagrams.

$$\bfig
\morphism(0,0)|a|/{>}/<1000,0>[T(A\oplus M)`T(B\oplus M);T(g \oplus 1_M)]
\morphism(0,0)|l|/{>}/<0,-500>[`TA;T\pi_1]
\morphism(60,-500)|b|/{>}/<940,0>[`TB;Tg]
\morphism(1000,0)|r|/{>}/<0,-440>[`;T\pi_1]
\morphism(0,-500)|l|/{>}/<0,-500>[`A;a]
\morphism(1000,-500)|r|/{>}/<0,-500>[`B;b]
\morphism(40,-1000)|b|/{>}/<920,0>[`;g]
\morphism(-10,-30)|l|/{@{>}@/_3em/}/<0,-930>[`;\beta_1^{AM}]
\morphism(1030,-30)|r|/{@{>}@/^3em/}/<0,-920>[`;\beta_1^{BM}]
\efig$$

$$\bfig
\morphism(0,0)|a|/{>}/<1700,0>[T(A\oplus M)`T(B\oplus M);T(g \oplus 1_M)]
\morphism(0,0)|l|/{>}/<0,-600>[`T(A\oplus M) \otimes (A\oplus M);d]
\morphism(1700,0)|r|/{>}/<0,-600>[`T(B\oplus M) \otimes (B\oplus M);d]
\morphism(450,-600)|a|/{>}/<800,0>[`;T(g \oplus 1_M) \otimes (g \oplus 1_M)]
\morphism(0,-600)|l|/{>}/<0,-300>[`TA \otimes M;T\pi_1 \otimes \pi_2]
\morphism(1700,-600)|r|/{>}/<0,-300>[`TB \otimes M;T\pi_1 \otimes \pi_2]
\morphism(200,-900)|a|/{>}/<1300,0>[`;Tg \otimes 1_M]
\morphism(0,-900)|l|/{>}/<0,-300>[`A \otimes M;a \otimes 1_M]
\morphism(1700,-900)|r|/{>}/<0,-300>[`B \otimes M;b \otimes 1_M]
\morphism(150,-1200)|a|/{>}/<1400,0>[`;g \otimes 1_M]
\morphism(0,-1200)|l|/{>}/<0,-300>[`M;\bullet_M]
\morphism(1700,-1200)|r|/{>}/<0,-300>[` M;\bullet_M]
\morphism(60,-1500)|b|/{>}/<1590,0>[`;1_M]
\morphism(-30,-50)|l|/{@{>}@/_6em/}/<0,-1400>[`;\beta_2^{AM}]
\morphism(1730,-50)|r|/{@{>}@/^6em/}/<0,-1400>[`;\beta_2^{BM}]
\efig$$

\end{prf}

\begin{prop}\label{page8}
With assumptions as in previous proposition, let $\partial\colon A\rarr M$ be such that $\langle g,\partial\rangle\colon A\rarr W(B,M)$ is a map of 
$T$-algebras.  Then $\partial\colon A\rarr M_A$ is a $T$-derivation.
\end{prop}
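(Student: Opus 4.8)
The plan is to unwind both hypotheses into equations and observe that they coincide. Since the underlying object $B\oplus M$ of $W(B,M)$ is a biproduct, hence a product in $\cC$, the hypothesis that $\langle g,\partial\rangle\colon A\rarr W(B,M)$ is a map of $T$-algebras, i.e.\ that $T\langle g,\partial\rangle;\beta^{BM}=a;\langle g,\partial\rangle$, is equivalent to the conjunction of its two components obtained by post-composing with $\pi_1$ and $\pi_2$. I would treat these two components separately. Post-composing with $\pi_1$ and using $\beta^{BM};\pi_1=\beta^{BM}_1=T\pi_1;b$ together with $\langle g,\partial\rangle;\pi_1=g$, the first component reduces to $Tg;b=a;g$, which holds precisely because $g$ is a $T$-algebra homomorphism; thus it carries no new information.

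The content lies in the second component. Post-composing with $\pi_2$ and unfolding $\beta^{BM}_2=d;(T\pi_1\ox\pi_2);(b\ox 1);\bullet_M$, I would apply naturality of $d$ to slide $T\langle g,\partial\rangle$ across $d$, rewriting the left-hand side with $d_{TA};\bigl(T\langle g,\partial\rangle\ox\langle g,\partial\rangle\bigr)$ at the front. Then $\bigl(T\langle g,\partial\rangle\ox\langle g,\partial\rangle\bigr);(T\pi_1\ox\pi_2)=Tg\ox\partial$ by the product identities $\langle g,\partial\rangle;\pi_1=g$ and $\langle g,\partial\rangle;\pi_2=\partial$, and a second use of $Tg;b=a;g$ absorbs $(b\ox 1)$ to give $(a;g)\ox\partial$. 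The second component therefore becomes exactly
\[
a;\partial \;=\; d_{TA};\bigl((a;g)\ox\partial\bigr);\bullet_M .
\]

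Finally, I would recognize this as the chain-rule condition of Proposition \ref{prop:chn_rule_charn_of_tderns} for $\partial$ regarded as a morphism into $M_A$. By Remark \ref{rem:derns_alg_maps_mon} the $A$-action on $M_A$ is the restriction of scalars $\bullet_{M_A}=(g\ox 1);\bullet_M$, so the chain-rule diagram for $\partial\colon A\rarr M_A$ reads $a;\partial=d_{TA};(a\ox\partial);\bullet_{M_A}=d_{TA};\bigl((a;g)\ox\partial\bigr);\bullet_M$, which is literally the equation just obtained. Hence $\partial\colon A\rarr M_A$ is a $T$-derivation. I expect no real obstacle beyond bookkeeping: the only steps requiring care are the naturality-of-$d$ rewrite and the correct distribution of the projections $\pi_1,\pi_2$ across the tensor, which is why the cleanest presentation mirrors the two verification diagrams used for the analogous lemmas (e.g.\ Proposition \ref{4.16}).
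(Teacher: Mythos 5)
Your proof is correct, but it takes a different route from the paper's. The paper keeps the argument at the level of $T$-algebra maps: it writes $\langle 1_A,\partial\rangle;(g\oplus 1_M)=\langle g,\partial\rangle$, invokes Proposition \ref{4.16} (that $g\oplus 1_M\colon W(A,M_A)\rarr W(B,M)$ is a $T$-algebra map, so $T(g\oplus 1_M);\beta_2^{BM}=\beta_2^{AM}$) together with the second component of the hypothesis, and concludes via the $\beta_2$-characterization of $T$-derivations, i.e.\ $T\langle 1_A,\partial\rangle;\beta_2^{AM}=a;\partial$. You instead bypass Proposition \ref{4.16} entirely: you unfold $\beta_2^{BM}$, slide $T\langle g,\partial\rangle$ across $d$ by naturality, collapse the projections, and recognize the resulting equation as the chain-rule characterization of Proposition \ref{prop:chn_rule_charn_of_tderns} for $\partial\colon A\rarr M_A$ with action $(g\ox 1);\bullet_M$. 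The trade-off is that the paper's proof is more modular (it reuses an already-proved lemma and stays at the level of $W(-,-)$), while yours is self-contained and makes the role of restriction of scalars explicit; in effect you inline the relevant cell of the proof of Proposition \ref{4.16}. Your observation that the first component carries no information beyond $g$ being a $T$-algebra homomorphism is also consistent with the standing assumptions. No gap.
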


\begin{prf}{}
This follows from the following calculation, which uses that $g\oplus 1_M$ is a $T$-algebra homomorphism
by the previous proposition. 

$$\bfig
\morphism(0,0)|a|/{>}/<1600,0>[TA`T(A\oplus M);T\begin{pmatrix} 1_A \\ \partial \end{pmatrix}]
\morphism(0,0)|l|/{>}/<0,-900>[TA`A;a]
\morphism(1600,0)|r|/{>}/<0,-900>[T(A\oplus M)`M;\beta_2^{AM}]
\morphism(0,-900)|b|/{>}/<1600,0>[A`M;\partial]
\morphism(0,0)|b|/{>}/<800,-300>[TA`T(B\oplus M);T\begin{pmatrix} g \\ \partial \end{pmatrix}]
\morphism(1600,0)|b|/{>}/<-800,-300>[T(A\oplus M)`T(B\oplus M);T(g\oplus 1_M)]
\morphism(800,-300)|r|/{>}/<0,-300>[T(B\oplus M)`M;\beta_2^{BM}]
\morphism(0,-900)|a|/{>}/<800,300>[A`M;\partial]
\morphism(800,-600)|a|/{>}/<800,-300>[M`M;1_M]
\efig$$

\end{prf}

\begin{defn}{\em
Let $Alg$ be the category of commutative algebras in a codifferential category $\cC$ and let 
$(-)-Mod\colon Alg^{op}\rarr Cat$
be the usual functor associating to an algebra its category of representations. The functor acts 
on morphisms by the usual restriction of scalars.

Composing with the functor $F^{op}\colon (\cC^T)^{op}\rarr Alg^{op}$ we obtain a functor $H\colon{\cC^T}^{op}\rarr Cat$. When we apply the usual Grothendieck construction to this functor, we obtain a category fibred over $\cC^T$ which we call $Mod_T$. Objects 
are pairs $(A,M)$ with $A$ a $T$-algebra and $M$ an $A$-module. Arrows are pairs $(g,h)\colon (A,M)\rarr (B,N)$ with $g\colon A\rarr B$
a $T$-algebra map and $h\colon M\rarr N_A$ a map of $A$-modules. Here $N_A$ is the restriction of scalars of $N$
along $g$ (Remark \ref{rem:derns_alg_maps_mon}).

}\end{defn}

\begin{thm}
There is a functor $W\colon Mod_T\rarr(\mathcal{C}^T)^{\rightarrow}$ that makes the following diagram commute:

$$\bfig
\morphism(0,0)|a|/{>}/<750,0>[Mod_T`(\mathcal{C}^T)^{\rightarrow};W]
\morphism(0,0)|b|/{>}/<375,-375>[Mod_T`\mathcal{C}^T;]
\morphism(750,0)|b|/{>}/<-375,-375>[(\mathcal{C}^T)^{\rightarrow}`\mathcal{C}^T;cod]
\efig$$

The functor is defined by:

\[\mbox{On objects: \,\,\,}(A,M)\mapsto [W(A,M)\stackrel{\pi_1}{-\!\!\!\!\!-\!\!\!\!\!-\!\!\!\longrightarrow}A] \]
\[\mbox{On arrows:\,\,\,}(A,M)\stackrel{(g,h)}{-\!\!\!\!\!-\!\!\!\!\!-\!\!\!\longrightarrow}(B,N)\mapsto\mbox{the following:}\]

$$\bfig
\morphism(0,0)|a|/{>}/<1000,0>[W(A,M)`W(B,N);W(h,g) := g\oplus h]
\morphism(0,0)|l|/{>}/<0,-450>[W(A,M)`A;\pi_1]
\morphism(1000,0)|r|/{>}/<0,-450>[W(B,N)`B;\pi_1]
\morphism(0,-450)|b|/{>}/<1000,0>[A`B;g]

\efig$$

\noindent This functor is fibred over the base category $\cC^T$.

\end{thm}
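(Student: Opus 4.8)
The plan is to verify, in turn, that $W$ is well defined on objects and arrows, that it is functorial, that the triangle commutes, and finally that it preserves cartesian morphisms. On objects there is little to do: by the Lemma immediately following the definition of $W(A,M)$, the projection $\pi_1\colon W(A,M)\to A$ is a map of $T$-algebras, so $[W(A,M)\xrightarrow{\pi_1}A]$ genuinely is an object of $(\cC^T)^{\rightarrow}$ lying over $A$. Since $cod[W(A,M)\xrightarrow{\pi_1}A]=A$ is exactly the image of $(A,M)$ under the projection $Mod_T\to\cC^T$, the triangle commutes on objects.

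The essential point is that the arrow assignment lands in $(\cC^T)^{\rightarrow}$. Given $(g,h)\colon(A,M)\to(B,N)$, so that $g\colon A\to B$ is a $T$-algebra map and $h\colon M\to N_A$ is a map of $A$-modules, I would factor the underlying morphism $g\oplus h\colon A\oplus M\to B\oplus N$ as
\[
W(A,M)\xrightarrow{\,1_A\oplus h\,}W(A,N_A)\xrightarrow{\,g\oplus 1_N\,}W(B,N).
\]
The first factor is a $T$-algebra map by Lemma \ref{4.5}, applied to the $A$-module map $h\colon M\to N_A$, and the second is a $T$-algebra map by Proposition \ref{4.16}, applied to the $T$-algebra map $g$ and the $B$-module $N$ (whose restriction of scalars along $g$ is precisely $N_A$); hence their composite $g\oplus h$ is a $T$-algebra map. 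The square identity $\pi_1;g=(g\oplus h);\pi_1$ is a routine consequence of the biproduct structure in $\cC$, and since its four edges are all $T$-algebra maps and the forgetful functor $U\colon\cC^T\to\cC$ is faithful, the square commutes in $\cC^T$. Thus $(g\oplus h,g)$ is a morphism of $(\cC^T)^{\rightarrow}$ whose image under $cod$ is $g$, which also gives commutativity of the triangle on arrows.

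Functoriality is then a matter of bookkeeping. The pair $(1_A,1_M)$ is sent to $1_A\oplus 1_M=1_{A\oplus M}$, and for composable arrows one checks, using functoriality of restriction of scalars, that the second component of the composite in $Mod_T$ has underlying $\cC$-morphism $h;h'$; hence on underlying objects $W$ of the composite is $(g;g')\oplus(h;h')=(g\oplus h);(g'\oplus h')$, and faithfulness of $U$ upgrades this to an equality of $T$-algebra maps.

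The last step, which I expect to be the main obstacle, is the fibred claim, and here I would argue via creation of limits. In the Grothendieck construction $Mod_T$ an arrow $(g,h)$ is cartesian over $g$ exactly when $h\colon M\to N_A$ is an isomorphism, the canonical lift of $g$ at $(B,N)$ being $(g,1_{N_A})\colon(A,N_A)\to(B,N)$; for $cod$, a square is cartesian over its base edge iff it is a pullback in $\cC^T$. It therefore suffices to show that $W$ carries the canonical lift $(g,1_{N_A})$ to a pullback square, namely the one with edges $\pi_1$, $g\oplus 1_N$, $\pi_1$, $g$. In $\cC$ the identification $B\oplus N=B\times N$ exhibits $A\oplus N$, with legs $\pi_1$ and $g\oplus 1_N$, as the pullback of $\pi_1\colon B\oplus N\to B$ along $g$. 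Since $U\colon\cC^T\to\cC$ is monadic it creates limits, so this pullback lifts uniquely to $\cC^T$; the subtle part is matching structure maps, but the unique $T$-algebra structure making both legs $T$-algebra maps must, by that uniqueness, be the given $\beta^{A,N_A}$ (which does make them $T$-algebra maps). Hence the image square is a pullback in $\cC^T$, $W$ preserves cartesian arrows, and $W$ is a morphism of fibrations over $\cC^T$.
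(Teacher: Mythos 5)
Your proposal is correct, and its first half --- well-definedness on objects via the lemma that $\pi_1\colon W(A,M)\rarr A$ is a $T$-algebra map, the factorization $g\oplus h=(1_A\oplus h);(g\oplus 1_N)$ through $W(A,N_A)$ using Lemma \ref{4.5} and Proposition \ref{4.16}, and the resulting commutativity of the triangle --- is exactly the paper's argument. Where you genuinely diverge is in the fibredness claim. The paper verifies the universal property of the square on $W(A,N_A)$ directly: given a cone $f\colon Q\rarr A$, $q=\langle f;g,\partial\rangle\colon Q\rarr W(B,N)$, it invokes Proposition \ref{page8} to conclude that $\partial\colon Q\rarr N_Q$ is a $T$-derivation, so that $\langle 1_Q,\partial\rangle$ and hence $\langle f,\partial\rangle=\langle 1_Q,\partial\rangle;(f\oplus 1)$ are $T$-algebra maps, exhibiting the mediating morphism explicitly. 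You instead observe that the underlying square is a pullback in $\cC$ (since finite coproducts are biproducts), that the monadic forgetful functor $\cC^T\rarr\cC$ creates limits, and that the unique created $T$-algebra structure on $A\oplus N$ must coincide with $\beta^{A,N_A}$ because that structure already makes both legs $T$-algebra maps. Both arguments are sound. Yours is shorter and bypasses Proposition \ref{page8} entirely, at the cost of leaning on the abstract creation-of-limits property; the paper's is more computational but produces the factorizing map concretely and keeps the $T$-derivation machinery (which is the conceptual heart of the section) in view. One small remark: your reduction of the cartesian-preservation claim to the canonical lifts $(g,1_{N_A})$ is fine, since an arbitrary cartesian arrow differs from a canonical one by a vertical isomorphism, which $W$ sends to an isomorphism.
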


\begin{prf}{}
We evidently have that $(1\oplus h);(g\oplus 1)=g\oplus h$ is a map of $T$-algebras by Lemma \ref{4.5} and Proposition \ref{4.16}, and so we have a functor making the triangle commute. 

Now given a $T$-algebra homomorphism $g\colon A\rarr B$ and a $B$-module $N$, we get a cartesian arrow 
over $g$ in $Mod_T$ as $(g,1_N)\colon (A,N_A)\rarr(B,N)$. It suffices to show that 

\smallskip

$$\bfig
\morphism(0,0)|a|/{>}/<1000,0>[W(A,N_A)`W(B,N);W(g,1_N)]
\morphism(0,0)|l|/{>}/<0,-500>[W(A,N_A)`A;\pi_1]
\morphism(1000,0)|r|/{>}/<0,-500>[W(B,N)`B;\pi_1]
\morphism(0,-500)|b|/{>}/<1000,0>[A`B;g]
\efig$$

\smallskip

\noindent is a pullback.  Given $f\colon Q\rarr A$ and $q\colon Q\rarr W(B,N)$ in $\mathcal{C}^T$ such that $f;g=q;\pi_1$, we find that $q=\langle f;g,\partial\rangle$ for some $\partial\colon Q\rarr N$. By Lemma \ref{page8}, we conclude $\partial\colon Q\rarr N_Q$ is a $T$-derivation. So $\langle 1_Q,\partial\rangle$ 
is a $T$-algebra map and thus $\langle 1_Q,\partial\rangle;f\oplus 1=\langle f,\del\rangle\colon 
Q\rarr W(A,N_A)$ is a $T$-algebra map. The result now follows.
\end{prf}

\begin{defn}{\em Let $A$ be a $T$-algebra and $(B,M)$ in $Mod_T$. Let $Der(A,(B,M))$ be the set of all pairs
 $(g,\del)$ with $g\colon A\rarr B$ a $T$-algebra map and $\del\colon A\rarr M_A$ a $T$-derivation.
}\end{defn}

We now record two related results which are straightforward. 

\begin{prop}
The operation $Der$ of the previous definition is functorial in both variables and forms part of a natural isomorphism:

\[\cC^T(A,W(B,M))\cong Der(A,(B,M))\]
\end{prop}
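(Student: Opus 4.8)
The plan is to exhibit the claimed natural isomorphism $\cC^T(A,W(B,M))\cong Der(A,(B,M))$ by unwinding both sides and showing they parametrize the same data. A map $A\rarr W(B,M)=\langle B\oplus M,\beta^{BM}\rangle$ in $\cC^T$ is, since $B\oplus M$ is a biproduct, a pair of morphisms $\langle g,\del\rangle\colon A\rarr B\oplus M$ that is a $T$-algebra homomorphism. First I would show that the first-component condition forces $g\colon A\rarr B$ to itself be a $T$-algebra map: composing $\langle g,\del\rangle$ with the $T$-algebra map $\pi_1\colon W(B,M)\rarr B$ (the Lemma following the construction of $W$) yields $g$, so $g$ is a composite of $T$-algebra maps and hence a $T$-algebra map. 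This uses the same splitting-into-two-equations argument already seen in the proof of the proposition characterizing $T$-derivations via the $\beta_1,\beta_2$ components. Then, granted that $g$ is a $T$-algebra map, Proposition \ref{page8} applies verbatim: the hypothesis there is precisely that $\langle g,\del\rangle\colon A\rarr W(B,M)$ is a $T$-algebra map, and its conclusion is that $\del\colon A\rarr M_A$ is a $T$-derivation. So every element of $\cC^T(A,W(B,M))$ yields a pair $(g,\del)\in Der(A,(B,M))$.

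For the reverse direction I would need the converse of Proposition \ref{page8}: given a $T$-algebra map $g\colon A\rarr B$ and a $T$-derivation $\del\colon A\rarr M_A$, the pair $\langle g,\del\rangle\colon A\rarr W(B,M)$ is a $T$-algebra map. The cleanest route is to factor $\langle g,\del\rangle$ as the composite $A\xrightarrow{\langle 1_A,\del\rangle}W(A,M_A)\xrightarrow{g\oplus 1_M}W(B,M)$, exactly as in the proof of the fibration theorem. The first factor is a $T$-algebra map because $\del$ is a $T$-derivation (this is the definition of $T$-derivation via $W$), and the second factor $g\oplus 1_M=W(g,1_M)$ is a $T$-algebra map by Proposition \ref{4.16}. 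Since $(g\oplus 1_M)$ acts as the identity on the $M$ summand and as $g$ on the $A$ summand, the composite is $\langle g,\del\rangle$, giving the desired inverse assignment. The two constructions are mutually inverse by the biproduct universal property, so the bijection is established pointwise.

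Finally I would address functoriality and naturality. $Der(A,(B,M))$ must first be made functorial: contravariantly in $A$ by precomposition (a $T$-algebra map $A'\rarr A$ sends $(g,\del)$ to $(g\circ(-),\del\circ(-))$, using Remark \ref{rem:derns_alg_maps_mon} to see that precomposing a $T$-derivation by a $T$-algebra map yields a $T$-derivation into the restricted module), and covariantly in $(B,M)$ along arrows $(B,M)\rarr(B',N)$ of $Mod_T$, postcomposing $g$ with the base map and transporting $\del$ via the module map $h$ using Lemma \ref{4.5}. With these functorialities fixed, naturality of the bijection reduces to checking that the two evident transport operations agree under the identifications of the previous paragraph, which is a diagram chase using that the factorization $\langle g,\del\rangle=\langle 1_A,\del\rangle;(g\oplus 1_M)$ is compatible with the $W$-functor on arrows established in the fibration theorem.

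I expect the main obstacle to be purely bookkeeping rather than conceptual: pinning down the precise functorial structure on $Der(A,(B,M))$ in both variables so that the isomorphism is genuinely natural, since the previous definition only introduces $Der$ as a set and the covariant dependence on $(B,M)\in Mod_T$ requires carefully combining the $T$-algebra map in the base with the $A$-module map in the fibre. All the hard semantic content—that the two components of a morphism into $W(B,M)$ decouple into a $T$-algebra map and a $T$-derivation—has already been isolated in Propositions \ref{4.16} and \ref{page8}, so the proof is essentially an assembly of those lemmas plus verification of naturality squares.
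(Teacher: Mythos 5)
Your proof is correct, and it follows the route the paper itself intends: the paper states this proposition without proof (calling it straightforward), and your argument assembles precisely the ingredients set up for it — the projection $\pi_1$ being a $T$-algebra map, Proposition \ref{page8} for the forward direction, and the factorization $\langle g,\partial\rangle=\langle 1_A,\partial\rangle;(g\oplus 1_M)$ together with Proposition \ref{4.16} and Lemma \ref{4.5} for the inverse and for naturality, exactly as in the paper's proof of the fibration theorem. Nothing further is needed.
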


This result extends to the slice category in a straightforward way.

\begin{prop} Given a $T$-algebra map $g\colon A\rarr B$, we have the following natural isomorphism:

\[\cC^T/B(A,W(B,M))\cong Der(A,M_A)\]
\end{prop}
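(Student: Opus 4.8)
The plan is to obtain this slice-level isomorphism by restricting the isomorphism $\cC^T(A,W(B,M))\cong Der(A,(B,M))$ of the preceding proposition to the fiber over the fixed $T$-algebra map $g$. The point is that holding the first coordinate equal to $g$ on the left exactly cuts out the pairs with first coordinate $g$ on the right, and these are by definition the $T$-derivations valued in $M_A$.

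First I would unpack the left-hand side. A morphism in $\cC^T/B$ from the object $g\colon A\rarr B$ to the object $\pi_1\colon W(B,M)\rarr B$ is a $T$-algebra map $h\colon A\rarr W(B,M)$ satisfying $h;\pi_1=g$. Since $W(B,M)=B\oplus M$ is a biproduct, such an $h$ is determined by its two components $h;\pi_1$ and $h;\pi_2$, and the slice constraint forces the first to be $g$, so $h=\langle g,\del\rangle$ with $\del:=h;\pi_2\colon A\rarr M$. I would then invoke the preceding proposition: under its bijection the $T$-algebra map $h=\langle g,\del\rangle$ corresponds to the pair $(g,\del)\in Der(A,(B,M))$, where $g$ is the first component (a $T$-algebra map by hypothesis) and $\del\colon A\rarr M_A$ is a $T$-derivation valued in the restriction of scalars along $g$. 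Hence, as $h$ ranges over the slice hom-set, the correspondence selects precisely those pairs whose first coordinate is $g$, i.e.\ the set $Der(A,M_A)$. The assignments $h\mapsto h;\pi_2$ and $\del\mapsto\langle g,\del\rangle$ are therefore mutually inverse bijections, the second landing in the slice because $\langle g,\del\rangle;\pi_1=g$; well-definedness of this inverse is exactly Proposition \ref{page8} together with the construction underlying the preceding proposition.

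Finally I would establish naturality by restricting the naturality already proved for the global isomorphism. Treating $g$ as fixed, the remaining variables are the $B$-module $M$ and the slice object $A$ over $B$; for a $B$-module map $h'\colon M\rarr N$ the induced morphism $W(B,h')=1_B\oplus h'$ preserves first coordinates and so carries the fiber over $g$ into the fiber over $g$, while acting on $T$-derivations by postcomposition with $h'$, and the analogous check handles precomposition with slice morphisms into $(A,g)$. I do not expect a genuine obstacle here: the argument is essentially formal once the biproduct decomposition is in place, and the only point requiring care is the bookkeeping that fixing the first coordinate $g$ on the slice side matches the scalar restriction $M_A$ used to define $Der(A,M_A)$, which is precisely the content isolated in Proposition \ref{page8}.
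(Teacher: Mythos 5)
Your argument is correct and is exactly the intended one: the paper states this result without proof (labelling it ``straightforward''), and your derivation---restricting the bijection $\cC^T(A,W(B,M))\cong Der(A,(B,M))$ to the fibre over $g$ via the biproduct decomposition $h=\langle g,\del\rangle$, with Proposition \ref{page8} supplying the forward direction and Proposition \ref{4.16} (composition with $g\oplus 1_M$) the inverse---is precisely how the paper means for it to follow. No gaps.
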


We now present the main result of the section, demonstrating that the construction of K\"ahler modules for $T$-algebras lifts to 
the setting of $T$-derivations.

\begin{thm}\label{main} Suppose $\mathcal{C}$ has reflexive coequalizers, and that these are preserved by $\otimes$ in each variable. Then every $T$-algebra $(A,\nu)$ has a universal $T$-derivation.
\end{thm}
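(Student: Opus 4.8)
The plan is to construct the module of Kähler $T$-differentials $\Omega^T_A$ as a coequalizer of the Kähler $T$-modules of the free $T$-algebras $TA$ and $T^2A$, exactly paralleling the coequalizer description of $\Omega_{A,\nu}$ in the preceding theorem (whose proof was deferred to this one). The starting point is the canonical Beck presentation of $(A,\nu)$,
\[
T^2A \underset{T\nu}{\overset{\mu_A}{\rightrightarrows}} TA \xrightarrow{\nu} A ,
\]
which is a reflexive pair with common section $T\eta_A$ and is in fact a split coequalizer in $\cC$, with splittings $\eta_A$ and $\eta_{TA}$; this uses only the $T$-algebra laws $\mu_A;\nu=T\nu;\nu$ and $\eta_A;\nu=1$ together with the monad and naturality axioms. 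In particular this coequalizer is absolute. By the theorem identifying each $d_{TC}\colon TC\rarr TC\ox C$ as a universal $T$-derivation, the free $T$-algebras carry universal $T$-derivations $d_{TA}\colon TA\rarr TA\ox A$ and $d_{T^2A}\colon T^2A\rarr T^2A\ox TA$, with Kähler $T$-modules the free modules $TA\ox A$ and $T^2A\ox TA$.

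First I would make $\Omega^T_{(-)}$ functorial on the free $T$-algebras involved. For a $T$-algebra map $f\colon X\rarr Y$ between free $T$-algebras, the composite $f;d_Y$ is a $T$-derivation $X\rarr (\Omega^T_Y)_X$ by restriction of scalars for $T$-derivations (the $T$-analogue of the final clause of Remark \ref{rem:derns_alg_maps_mon}, implicit in Proposition \ref{page8} and verifiable directly from the chain-rule diagram of Definition \ref{def:generalized_defn_tdern}); universality of $d_X$ then yields a unique $X$-linear $\Omega^T_f$ with $d_X;\Omega^T_f=f;d_Y$. Applying this to $\mu_A$ and $T\nu$ produces a reflexive pair $\Omega^T_{\mu_A},\Omega^T_{T\nu}\colon T^2A\ox TA\rightrightarrows TA\ox A$ (common section $\Omega^T_{T\eta_A}$, since $\mu_A;T\eta_A$ and $T\nu;T\eta_A$ are both $1_{TA}$). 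Because $\mu_A;\nu=T\nu;\nu$, extension of scalars along $\nu$ converts this into a reflexive pair of $A$-module maps $A\ox TA\rightrightarrows A\ox A$ between free $A$-modules, the two source copies $A\ox_{T^2A}(T^2A\ox TA)\cong A\ox TA$ agreeing precisely because the two composites $T^2A\rarr A$ coincide.

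Next I would define $\Omega^T_A$ to be the coequalizer of this last reflexive pair in $A\mbox{-Mod}$. Since $\ox$ preserves reflexive coequalizers in each variable, $A\ox(-)$ does as well, so $A\mbox{-Mod}$ has and $\cC$ creates such coequalizers; hence $\Omega^T_A$ exists and is computed on underlying objects in $\cC$. The universal $T$-derivation $d_A\colon A\rarr \Omega^T_A$ is then the morphism corresponding to the identity under the natural bijection established below; concretely it is $A\cong k\ox A\xrightarrow{e_A\ox 1}A\ox A\rarr\Omega^T_A$, and its being a $T$-derivation is automatic from that bijection.

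The universality amounts to a natural isomorphism $A\mbox{-Mod}(\Omega^T_A,M)\cong Der(A,M_A)$. By the coequalizer property and the free-module adjunction $A\mbox{-Mod}(A\ox A,M)\cong\cC(A,M)$, the left-hand set is identified with those $\mathcal{C}$-morphisms $f\colon A\rarr M$ that equalize the two induced maps $\cC(A,M)\rightrightarrows\cC(TA,M)$, i.e.\ that annihilate the difference of the base-changed $\Omega^T_{\mu_A},\Omega^T_{T\nu}$. The crux, and the main obstacle, is to show that this coequalizer condition on $f$ is exactly the condition that $f$ be a $T$-derivation in the sense of the chain-rule diagram of Proposition \ref{prop:chn_rule_charn_of_tderns}. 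Establishing this equivalence is a diagram chase translating the universal property of $d_{TA}$, naturality of $d$, and the defining squares $d_{T^2A};\Omega^T_{\mu_A}=\mu_A;d_{TA}$ and $d_{T^2A};\Omega^T_{T\nu}=T\nu;d_{TA}$ into the chain rule, all while carefully tracking the $A$-module structures produced by restriction and extension of scalars along $\nu$. It is precisely here that the two standing hypotheses are indispensable: the existence of reflexive coequalizers builds $\Omega^T_A$, and their preservation by $\ox$ guarantees both that $\Omega^T_A$ is genuinely an $A$-module computed in $\cC$ and that maps out of it correspond to pairs of module maps coequalizing the given reflexive pair. Granting the equivalence, the bijection is natural in $M$, so $(\Omega^T_A,d_A)$ is the required module of Kähler $T$-differentials and every $T$-algebra has a universal $T$-derivation.
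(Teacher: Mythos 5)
Your proposal is correct in outline but takes a genuinely different route from the paper's. The paper forms the coequalizer of $\Omega^T_{\mu},\Omega^T_{T\nu}\colon \Omega^T_{T^2A}\rightrightarrows\Omega^T_{TA}$ directly in $\cC$, invokes Johnstone's $3\times 3$ lemma for reflexive coequalizers to transport an $A$-module structure onto the quotient, and then verifies by explicit descent along the split epimorphism $\nu$ that the induced $d_A$ is a $T$-derivation and is universal. You instead base-change the pair along $\nu$ first, coequalize the resulting reflexive pair of free $A$-modules $A\ox TA\rightrightarrows A\ox A$ in $A$-Mod (legitimately created from $\cC$, since $A\ox(-)$ preserves reflexive coequalizers by hypothesis), and then read off universality from the hom-set calculus; this buys you the universal property formally, at the cost of the identification you rightly flag as the crux. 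That identification does go through, and more easily than you suggest: since $u_{TA};\Omega^T_{\mu}=d_{TA}$ and $u_{TA};\Omega^T_{T\nu}=\nu;u_A$ (both consequences of (d3) and the defining squares for $\Omega^T_{(-)}$), the two base-changed maps $A\ox TA\rarr A\ox A$ are the $A$-linear extensions of $d_{TA};(\nu\ox 1)$ and $\nu;(e_A\ox 1)$, so an $A$-linear map $\tilde f\colon A\ox A\rarr M$ corresponding to $f\colon A\rarr M$ coequalizes them if and only if $d_{TA};(\nu\ox f);\bullet_M=\nu;f$, which is exactly the chain-rule characterization of $T$-derivations in Proposition \ref{prop:chn_rule_charn_of_tderns}; spelling out these two or three lines would complete your argument. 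Two small points to tidy: the common section of your reflexive pair satisfies $T\eta_A;\mu_A=T\eta_A;T\nu=1_{TA}$ (your composites are written in the reverse of the paper's diagrammatic order), and the functoriality of $\Omega^T_{(-)}$ on free $T$-algebras that you use is precisely the paper's opening observation that $f;d_Y$ is a $T$-derivation, obtainable from Proposition \ref{page8}.
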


\begin{prf}{}
Let $g:A \to B$ be a morphism of $T$-algebras, and suppose that universal $T$-derivations $d_A:A \to \Omega_A^T$, $d_B:B \to \Omega_B^T$ exist. Then there is a unique $A$-linear morphism $\Omega_g^T$ such that 
$$\bfig
\morphism(0,0)|b|/{>}/<400,0>[A`B;g]
\morphism(0,0)|l|/{>}/<0,300>[A`\Omega_A^T;d_A]
\morphism(400,0)|r|/{>}/<0,300>[B`\Omega_B^T;d_B]
\morphism(0,300)|a|/{>}/<400,0>[\Omega_A^T`\Omega_B^T;\Omega_g^T]
\efig$$
commutes, where $\Omega_B^T$ is considered as an $A$-module by restriction of scalars along $g$. This follows from the observation that $g;d_B:A \to \Omega_B^T$ is a $T$-derivation.

\begin{lem}
Suppose we are given morphisms in the category $Alg$ as follows which constitute a reflexive coequalizer in \cC
$$\bfig
\morphism(0,0)|a|/@{>}@<+2pt>/<500,0>[A_1`A_2;f]
\morphism(0,0)|b|/@{>}@<-2pt>/<500,0>[A_1`A_2;g]
\morphism(500,0)|a|/{>}/<500,0>[A_2`A_3;k]
\morphism(1000,0)/{}/<0,0>[A_3`;]
\efig$$

Let $M_i$ be an $A_i$-module for $i = 1,2$, and let $\phi:M_1\to f^*(M_2)$ and $\gamma:M_1 \to g^*(M_2)$ be $A_1$-linear, where $f^*(M_2)$ and $g^*(M_2)$ denote $M_2$ equipped with the $A_1$-module structures induced by f and g, respectively. Suppose
$$\bfig
\morphism(0,0)|a|/@{>}@<+2pt>/<500,0>[M_1`M_2;\phi]
\morphism(0,0)|b|/@{>}@<-2pt>/<500,0>[M_1`M_2;\gamma]
\morphism(500,0)|a|/{>}/<500,0>[M_2`M_3;\kappa]
\morphism(1000,0)/{}/<0,0>[M_3`;]
\efig$$
is a reflexive coequalizer in $\mathcal{C}$. Then there is a unique $A_3$-module structure on $M_3$ such that $\kappa:M_2 \to k^*(M_3)$ is $A_2$-linear.
\end{lem}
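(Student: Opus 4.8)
The plan is to construct the $A_3$-module structure on $M_3$ by transporting the $A_2$-action along the coequalizer maps, then verify both well-definedness and uniqueness. First I would recall the data: we have a reflexive coequalizer $k\colon A_2 \to A_3$ of algebra maps $f,g\colon A_1 \to A_2$, and a reflexive coequalizer $\kappa\colon M_2 \to M_3$ of the $A_1$-linear maps $\phi,\gamma\colon M_1 \to M_2$. The desired action is a morphism $\bullet_{M_3}\colon A_3 \otimes M_3 \to M_3$. The key observation to exploit is the hypothesis that $\otimes$ preserves reflexive coequalizers in each variable, so that $A_3 \otimes M_3$ is itself a reflexive coequalizer: applying $\otimes$ variable-by-variable, $A_3 \otimes M_3$ arises as a (reflexive) coequalizer built from $A_2 \otimes M_2$, with comparison map $k \otimes \kappa\colon A_2 \otimes M_2 \to A_3 \otimes M_3$.

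\smallskip

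The central step is to produce $\bullet_{M_3}$ by the universal property of the coequalizer $k \otimes \kappa$. I would start from the $A_2$-action $\bullet_{M_2}\colon A_2 \otimes M_2 \to M_2$ and postcompose with $\kappa$ to get $A_2 \otimes M_2 \xrightarrow{\bullet_{M_2}} M_2 \xrightarrow{\kappa} M_3$. To factor this through $k \otimes \kappa$, I must check it coequalizes the two relevant parallel pairs coming from the left-hand coequalizers. Concretely, the diagram for $A_3 \otimes M_3$ as an iterated coequalizer yields parallel pairs of the form $f \otimes 1$ versus $g \otimes 1$ (from coequalizing in the $A$-variable) and $1 \otimes \phi$ versus $1 \otimes \gamma$ (from coequalizing in the $M$-variable). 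The compatibility I must verify is precisely that $\bullet_{M_2};\kappa$ equalizes these, which unwinds to the two identities $(f \otimes 1);\bullet_{M_2};\kappa = (g \otimes 1);\bullet_{M_2};\kappa$ and $(1 \otimes \phi);\bullet_{M_2};\kappa = (1 \otimes \gamma);\bullet_{M_2};\kappa$. The first follows because $\phi$ and $\gamma$ are $A_1$-linear for the module structures $f^*(M_2)$ and $g^*(M_2)$ respectively, combined with the coequalizer identity $f;k = g;k$; the second follows from $A_1$-linearity of $\phi,\gamma$ together with $\phi;\kappa = \gamma;\kappa$. This gives the induced $\bullet_{M_3}$, and by construction $\kappa$ is $A_2$-linear, i.e. $(k \otimes \kappa);\bullet_{M_3} = \bullet_{M_2};\kappa$, which is exactly the statement that $\kappa\colon M_2 \to k^*(M_3)$ is $A_2$-linear.

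\smallskip

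Next I would check that $\bullet_{M_3}$ genuinely satisfies the module axioms (associativity and unitality over $A_3$). Each axiom is an equation between two morphisms out of an object built by applying $\otimes$ to the coequalizers; since $\otimes$ preserves these coequalizers, the relevant domains are again coequalizers with $A_2$-level comparison maps that are epimorphic in the appropriate sense. Thus each axiom can be verified by precomposing with the (jointly epic) comparison maps and reducing it to the corresponding $A_2$-module axiom for $\bullet_{M_2}$, which holds by hypothesis. Uniqueness is the easiest part: any $A_3$-action making $\kappa$ into an $A_2$-linear map must satisfy $(k \otimes \kappa);\bullet_{M_3} = \bullet_{M_2};\kappa$, and since $k \otimes \kappa$ is a coequalizer map it is epic, so $\bullet_{M_3}$ is uniquely determined.

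\smallskip

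The main obstacle I anticipate is bookkeeping the iterated coequalizer structure on $A_3 \otimes M_3$ correctly: applying $- \otimes M_3$ to the first coequalizer and $A_2 \otimes -$ to the second produces \emph{two} parallel pairs, and one must be careful that the composite map $\bullet_{M_2};\kappa$ coequalizes both, and that the resulting factorization is unambiguous. Handling the two coequalizers simultaneously rather than sequentially is where reflexivity is genuinely used, since reflexive coequalizers behave well under such two-variable manipulations. Once this diagram chase is set up cleanly, the verification of the module axioms is routine and follows the same precompose-with-the-epic-comparison-map pattern.
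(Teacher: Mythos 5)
Your overall architecture---present $A_3\otimes M_3$ as an iterated coequalizer, induce $\bullet_3$ from $\bullet_2;\kappa$ by the universal property, then verify the module axioms and uniqueness against the epic comparison maps---is a legitimate alternative to the paper's route, which instead invokes Johnstone's $3\times 3$ lemma to exhibit $A_3\otimes M_3$ as the coequalizer of the single \emph{diagonal} pair $f\otimes\phi,\ g\otimes\gamma\colon A_1\otimes M_1\rightrightarrows A_2\otimes M_2$. However, the step where you verify the two compatibility conditions is wrong as stated, and that step is the crux of the lemma. Take your first condition, $(f\otimes 1_{M_2});\bullet_2;\kappa=(g\otimes 1_{M_2});\bullet_2;\kappa$ on $A_1\otimes M_2$: you claim it follows from $A_1$-linearity of $\phi,\gamma$ ``combined with $f;k=g;k$,'' but $f;k=g;k$ is an identity of maps into $A_3$ and cannot interact with a composite landing in $M_3$, while $A_1$-linearity of $\phi$ and $\gamma$ only controls the action on elements in the images of $\phi$ and $\gamma$, not on a general element of $M_2$. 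Indeed, if the pair $(\phi,\gamma)$ were not reflexive the condition can simply fail (take $\phi=\gamma=0$, so $\kappa=1_{M_2}$, with $f^*(M_2)\neq g^*(M_2)$); so no argument using only the ingredients you list can succeed. The missing ingredient is the common section $\sigma\colon M_2\to M_1$ of $(\phi,\gamma)$: since $\sigma;\phi=\sigma;\gamma=1_{M_2}$ and $(f\otimes\phi);\bullet_2=\bullet_1;\phi$ by $A_1$-linearity, one gets $(f\otimes 1_{M_2});\bullet_2;\kappa=(1\otimes\sigma);(f\otimes\phi);\bullet_2;\kappa=(1\otimes\sigma);\bullet_1;\phi;\kappa=(1\otimes\sigma);\bullet_1;\gamma;\kappa=(g\otimes 1_{M_2});\bullet_2;\kappa$. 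Your second condition, $(1_{A_2}\otimes\phi);\bullet_2;\kappa=(1_{A_2}\otimes\gamma);\bullet_2;\kappa$, has the symmetric defect and is repaired the same way using the common section of $(f,g)$.

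Once the sections are supplied, your sequential argument goes through, and the remainder (module axioms checked by precomposition with the epic comparison maps, here best done on $A\otimes A\otimes M$ as in the paper; uniqueness because $k\otimes\kappa$ is epi) agrees with the paper. The trade-off between the two routes is worth noting: the paper's use of the $3\times 3$ lemma concentrates every appeal to reflexivity inside that lemma, leaving only the one-line check $(f\otimes\phi);\bullet_2;\kappa=\bullet_1;\phi;\kappa=\bullet_1;\gamma;\kappa=(g\otimes\gamma);\bullet_2;\kappa$, whereas in your variable-by-variable decomposition reflexivity must be invoked explicitly, twice, at exactly the point where you elided it.
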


\begin{proof}
Since $\otimes$ preserves reflexive coequalizers, the rows and columns of the following diagram are reflexive coequalizers:
$$\bfig
	\morphism(0,0)|a|/@{>}@<+2pt>/<1000,0>[A_1 \otimes M_1`A_1 \otimes M_2;1 \otimes \phi]
	\morphism(0,0)|b|/@{>}@<-2pt>/<1000,0>[A_1 \otimes M_1`A_1 \otimes M_2;1\otimes \gamma]
	\morphism(1000,0)|a|/{>}/<1000,0>[A_1 \otimes M_2`A_1 \otimes M_3;1 \otimes \kappa]

	\morphism(0,0)|l|/@{>}@<-2pt>/<0,-750>[A_1 \otimes M_1`A_2 \otimes M_1;g \otimes 1]
	\morphism(0,0)|r|/@{>}@<+2pt>/<0,-750>[A_1 \otimes M_1`A_2 \otimes M_1;f \otimes 1]
	\morphism(0,-750)|l|/{>}/<0,-750>[A_2 \otimes M_1`A_3 \otimes M_1;k \otimes 1]

	\morphism(0,-750)|a|/@{>}@<+2pt>/<1000,0>[A_2 \otimes M_1`A_2 \otimes M_2;1 \otimes \phi]
	\morphism(0,-750)|b|/@{>}@<-2pt>/<1000,0>[A_2 \otimes M_1`A_2 \otimes M_2;1 \otimes \gamma]
	\morphism(1000,-750)|a|/{>}/<1000,0>[A_2 \otimes M_2`A_2 \otimes M_3;1 \otimes \kappa]

	\morphism(1000,0)|l|/@{>}@<-2pt>/<0,-750>[A_1 \otimes M_2`A_2 \otimes M_2;g \otimes 1]
	\morphism(1000,0)|r|/@{>}@<+2pt>/<0,-750>[A_1 \otimes M_2`A_2 \otimes M_1;f \otimes 1]
	\morphism(1000,-750)|l|/{>}/<0,-750>[A_2 \otimes M_2`A_3 \otimes M_2;k \otimes 1]
	
	\morphism(0,-1500)|a|/@{>}@<+2pt>/<1000,0>[A_3 \otimes M_1`A_3 \otimes M_2;1 \otimes \phi]
	\morphism(0,-1500)|b|/@{>}@<-2pt>/<1000,0>[A_3 \otimes M_1`A_3 \otimes M_2;1 \otimes \gamma]
	\morphism(1000,-1500)|a|/{>}/<1000,0>[A_3 \otimes M_2`A_3 \otimes M_3;1 \otimes \kappa]

	\morphism(2000,0)|l|/@{>}@<-2pt>/<0,-750>[A_1 \otimes M_3`A_2 \otimes M_3;g \otimes 1]
	\morphism(2000,0)|r|/@{>}@<+2pt>/<0,-750>[A_1 \otimes M_3`A_2 \otimes M_3;f \otimes 1]
	\morphism(2000,-750)|l|/{>}/<0,-750>[A_2 \otimes M_3`A_3 \otimes M_3;k \otimes 1]
	\morphism(2000,-1500)/{}/<0,0>[A_3 \otimes M_3`;]
\efig$$
By Johnstone's lemma, Lemma 0.17, p. 4 \cite{J}, it follows that the top row of
$$\bfig
	\morphism(0,0)|a|/@{>}@<+2pt>/<1000,0>[A_1 \otimes M_1`A_2 \otimes M_2;f \otimes \phi]
	\morphism(0,0)|b|/@{>}@<-2pt>/<1000,0>[A_1 \otimes M_1`A_2 \otimes M_2;g \otimes \gamma]
	\morphism(1000,0)|a|/{>}/<1000,0>[A_2 \otimes M_2`A_3 \otimes M_3;k \otimes \kappa]

	\morphism(0,-500)|a|/@{>}@<+2pt>/<1000,0>[M_1`M_2;\phi]
	\morphism(0,-500)|b|/@{>}@<-2pt>/<1000,0>[M_1`M_2;\gamma]
	\morphism(1000,-500)|a|/{>}/<1000,0>[M_2`M_3;\kappa]

	\morphism(0,0)|l|/{>}/<0,-500>[A_1 \otimes M_1`M_1;\bullet_1]
	\morphism(1000,0)|l|/{>}/<0,-500>[A_2 \otimes M_2`M_2;\bullet_2]
	\morphism(2000,0)|l|/{-->}/<0,-500>[A_3 \otimes M_3`M_3;\bullet_3]
	\morphism(2000,0)/{}/<0,0>[A_3 \otimes M_3`;]
	\morphism(2000,-500)/{}/<0,0>[M_3`;]
\efig$$
is also a reflexive coequalizer. We have that
\begin{align}
	f\otimes \phi;\bullet_2;\kappa &= 1_{A_1} \otimes \phi;f\otimes 1_{M_2};\bullet_2;\kappa \notag \\
				                 &= \bullet_1;\phi;\kappa \notag \\
					      &= \bullet_1;\gamma;\kappa \notag \\
					      &= 1_{A_1} \otimes \gamma; g \otimes 1_{M_2}; \bullet_2;\kappa \notag \\
					      &= g\otimes \gamma;\bullet_2;\kappa \notag
\end{align}
It follows that $\bullet_3:A_3 \otimes M_3 \to M_3$ is constructed as the unique map making the right-hand square in the above diagram commute. Hence it suffices to show that $\bullet_3$ is an $A_3$-module structure map on $M_3$. Again using Johnstone's Lemma, the top row of the following diagram is a reflexive coequalizer
$$\bfig
	\morphism(0,0)|a|/@{>}@<+2pt>/<1000,0>[A_1 \otimes A_1 \otimes M_1`A_2 \otimes A_2 \otimes M_2;f\otimes f \otimes \phi]
	\morphism(0,0)|b|/@{>}@<-2pt>/<1000,0>[A_1 \otimes A_1 \otimes M_1`A_2 \otimes A_2 \otimes M_2;g\otimes g \otimes \gamma]
	\morphism(1000,0)|a|/{>}/<1000,0>[A_2 \otimes A_2 \otimes M_2`A_3\otimes A_3 \otimes M_3;k\otimes k \otimes \kappa]

	\morphism(0,-500)|a|/@{>}@<+2pt>/<1000,0>[M_1`M_2;\phi]
	\morphism(0,-500)|b|/@{>}@<-2pt>/<1000,0>[M_1`M_2;\gamma]
	\morphism(1000,-500)|a|/{>}/<1000,0>[M_2`M_3;\kappa]

	\morphism(0,0)|l|/{>}/<0,-500>[A_1\otimes A_1 \otimes M_1`M_1;1\otimes \bullet_1;\bullet_1 = m_{A_1}\otimes 1;\bullet_1]
	\morphism(1000,0)|l|/{>}/<0,-500>[A_2\otimes A_2 \otimes M_2`M_2;1\otimes \bullet_2;\bullet_2 = m_{A_2}\otimes 1;\bullet_2]
	\morphism(2000,0)|l|/{-->}/<0,-500>[A_3 \otimes A_3 \otimes M_3`M_3;]
	\morphism(2000,0)/{}/<0,0>[A_3 \otimes A_3 \otimes M_3`;]
	\morphism(2000,-500)/{}/<0,0>[M_3`;]
\efig$$
It follows that there is a unique map $A_3\otimes A_3 \otimes M_3 \to M_3$ making the right-hand square commute. Since both $1_{A_3}\otimes \bullet_3;\bullet_3$ and $m_{A_3}\otimes 1_{M_3};\bullet_3$ satisfy this, the result follows. 
\end{proof}
Continuing with the proof of our theorem, since $\mu_A$ and $T\nu$ are $T$-algebra morphisms, they induce maps $\Omega_{\mu}^T$ and $\Omega_{T\nu}^T$ from $\Omega_{T^2A}^T$ to $\Omega_{TA}^T$, which exist by Theorem 4.14. Furthermore, there exists a map $\Omega_{T\eta}^T$ induced by $T\eta$, which splits both of these maps. Consider the following diagram. We define $d_{A}$ as the unique morphism in \cC\ such that 
$\nu;d_A=d_{TA};\Omega_\nu^T$, which exists since $\nu$ is the coequalizer of $\mu$ and $T\nu$.
Here we take $\Omega_\nu^T:\Omega_{TA}^T \to \Omega_{A}^T$ to be the coequalizer. 

$$\bfig
	\morphism(0,0)|a|/@{>}@<+2pt>/<1000,0>[\Omega_{T^2A}^T`\Omega_{TA}^T;\Omega_{\mu}^T]
	\morphism(0,0)|b|/@{>}@<-2pt>/<1000,0>[\Omega_{T^2A}^T`\Omega_{TA}^T;\Omega_{T\nu}^T]
	\morphism(1000,0)|a|/{>}/<1000,0>[\Omega_{TA}^T`\Omega_{A}^T;\Omega_{\nu}^T]

	\morphism(0,-500)|a|/@{>}@<+2pt>/<1000,0>[T^2A`TA;\mu]
	\morphism(0,-500)|b|/@{>}@<-2pt>/<1000,0>[T^2A`TA;T\nu]
	\morphism(1000,-500)|a|/{>}/<1000,0>[TA`A;\nu]

	\morphism(0,-500)|l|/{>}/<0,500>[T^2A`\Omega_{T^2A}^T;d_{T^2A}]
	\morphism(1000,-500)|l|/{>}/<0,500>[TA`\Omega_{TA}^T;d_{TA}]
	\morphism(2000,-500)|l|/{-->}/<0,500>[A`\Omega_{A}^T;d_A]
	\morphism(2000,-500)/{}/<0,0>[A`;]
	\morphism(2000,0)/{}/<0,0>[\Omega_A^T`;]
\efig$$

One readily verifies that the preceding lemma applies so that  
$\Omega_A^T$ is equipped with an $A$-module structure, which makes $\Omega_\nu^T$ $TA$-linear. We find that $d_A = \eta_A;d_{TA};\Omega_\nu^T$ since $\nu;\eta_A;d_{TA};\Omega_\nu^T=d_{TA};\Omega_\nu^T=\nu;d_A$, where the first equation is established through a short computation using the fact that $\nu;\eta_A=\eta_{TA};T\nu$.

Since
$$\bfig
	\morphism(0,0)|a|/{>}/<1000,0>[TA`A;\nu]
	\morphism(0,0)|l|/{>}/<0,-500>[TA`W(TA,\Omega_{TA}^T);\begin{pmatrix} 1_{TA} \\ d_{TA} \end{pmatrix}]
	\morphism(0,-500)|b|/{>}/<1000,0>[W(TA,\Omega_{TA}^T)`W(A,\Omega_A^T);W(\nu,\Omega_\nu^T)]
	\morphism(1000,0)|r|/{>}/<0,-500>[A`W(A,\Omega_A^T);\begin{pmatrix} 1_A \\ d_A \end{pmatrix}]
\efig$$
commutes, it follows that the right-hand map is a $T$-algebra homomorphism and therefore that $d_A$ is a $T$-derivation. Indeed, the counterclockwise composite is evidently a $T$-algebra homomorphism, and since $\nu$ is a $T$-algebra homomorphism that is split epi in $\mathcal{C}$, the fact that the right-hand map is a $T$-algebra homomorphism follows readily. 

Now suppose that $\partial:A \to M$ is a $T$-derivation. Then $\nu;\partial$ is a $T$-derivation, which must factor through $d_{TA}$ via a morphism of $TA$-modules $\partial '$. Since
\begin{align}
	d_{T^2A};\Omega_{T\nu}^T;\partial ' &= T\nu;d_{TA};\partial ' \notag \\
						     &= T\nu;\nu;\partial \notag \\
						     &= \mu;\nu;\partial \notag \\
						     &= \mu;d_{TA};\partial ' \notag \\
						     &= d_{T^2A};\Omega_\mu^T;\partial ' \notag
\end{align}
it follows from the universal property of $d_{T^2A}$ that $\Omega_{T\nu}^T;\partial ' = \Omega_{\mu}^T;\partial '$, so that $\partial '$ factors uniquely through $\Omega_\nu^T$ via a map $\partial^{\#}:\Omega_A^T \to M$. Since $\nu \otimes \Omega_\nu^T$ is a coequalizer, the following computation shows that this map is $A$-linear:
\begin{align}
	\nu\otimes\Omega_\nu^T;\bullet_A;\partial^{\#} &= \bullet_{TA};\Omega_{\nu}^T;\partial^{\#} \notag \\
								  &= \bullet_{TA};\partial ' \notag \\
								  &= 1_{TA} \otimes \partial ';\nu \otimes 1_A;\bullet_A \notag \\
								  &= \nu \otimes\Omega_\nu^T;1_A\otimes\partial^{\#};\bullet_A \notag 
\end{align}
Finally, we show that $\partial^{\#}$ is the unique $A$-linear morphism, which makes
$$\bfig
	\morphism(0,0)|a|/{>}/<450,0>[A`\Omega_{A};d_A]
	\morphism(0,0)|b|/{>}/<450,-450>[A`M;\partial]
	\morphism(450,0)|r|/{>}/<0,-450>[\Omega_{A}`M;\partial^{\#}]
\efig$$
commute. First, observe that $\nu;d_A;\partial^{\#} = d_{TA};\Omega_\nu^T;\partial^{\#} = d_{TA};\partial ' = \nu;\partial$ so that this does indeed commute after cancellation of $\nu$. Now suppose that there exists another $A$-linear map $k:\Omega_A^T \to M$ such that $d_A;k = \partial$. Then
\begin{align}
	d_{TA};\Omega_\nu^T;k &= \nu;d_A;k  \notag \\
		         		     &= \nu;\partial \notag \\
				     &= d_{TA};\partial ' \notag \\
				     &= d_{TA};\Omega_\nu^T;\partial^{\#} \notag
\end{align}
The universal property of $d_{TA}$ dictates that $\Omega_\nu^T;k = \Omega_\nu^T;\partial^{\#}$ and therefore $k = \partial^{\#}$ and the proof is complete.

\end{prf}

\section{An alternative definition of (co)differential category}

Realization of the importance of the symmetric algebra in the analysis of K\"ahler categories also has the benefit that
it leads to a succinct alternative definition of codifferential category as follows.

\begin{thm}\label{thm:alt_charn_codiff_cats}
Let $\cC$ be an additive symmetric monoidal category for which the symmetric algebra monad $\SSS$ on $\cC$ exists.  Assume that $\cC$ has reflexive coequalizers and that these are preserved by the tensor product in each variable.  Then to equip $\cC$ with the structure of a codifferential category is, equivalently, to equip $\cC$ with
\begin{enumerate}
\item a monad $T$,
\item a monad morphism $\lambda:\SSS \rightarrow T$, and
\item a transformation $d_{TC}:TC \rightarrow TC \otimes C$ natural in $C \in \cC$
\end{enumerate}
such that
\begin{enumerate}
\item[(a)] the diagram
$$
\xymatrix{
SC \ar[d]_{d_{SC}} \ar[r]^{\lambda_C} & TC \ar[d]^{d_{TC}} \\
SC \otimes C \ar[r]^{\lambda_C \otimes 1_C} & TC \otimes C
}
$$
commutes for each $C \in \cC$, where $d_{SC}$ is the deriving transformation carried by $\SSS$, and
\item[(b)] the Chain Rule axiom of Definition \ref{def:codiff_cat} holds, i.e. each $d_{TC}$ is a $T$-derivation.
\end{enumerate}
\end{thm}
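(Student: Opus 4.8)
The plan is to observe that the two kinds of structure have the \emph{same} underlying data and differ only in the axioms imposed, thereby reducing the theorem to a comparison of axioms. By Proposition~\ref{thm:alg_modality_via_mnd_mor}, equipping $\cC$ with a commutative algebra modality is the same as giving a monad $T$ together with a monad morphism $\lambda\colon S\rarr T$, which matches items (1) and (2); item (3) is precisely the deriving transform. Hence a codifferential structure and the data (1)--(3) share the same underlying data, and what must be shown is that, for fixed such data, the conjunction of the codifferential axioms (d1)--(d4) of Definition~\ref{def:codiff_cat} is equivalent to the conjunction of (a) and (b). Since Theorem~\ref{thm:chn_rule_iff_d_dern} already identifies (b) with the Chain Rule (d4), it remains only to prove, in the presence of (d4)/(b), that (a) is equivalent to the three axioms (d1), (d2), (d3). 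Throughout I would use that $(\cC,S,d_{S})$ is itself codifferential (Theorem~\ref{thm:smalg_mnd_codiff}), so that each $d_{SC}$ satisfies (d1)--(d4), and that $SC$ is the free commutative algebra on $C$ while $\lambda_C\colon SC\rarr TC$ is an algebra morphism preserving the unit (Proposition~\ref{thm:alg_modality_via_mnd_mor}).

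For the forward implication (codifferential $\Rightarrow$ (a)), I would argue that both composites in square (a), namely $\lambda_C;d_{TC}$ and $d_{SC};(\lambda_C\otimes 1_C)$, are \emph{derivations} $SC\rarr (TC\otimes C)_{SC}$, where $(TC\otimes C)_{SC}$ is the free $TC$-module $TC\otimes C$ regarded as an $SC$-module by restriction of scalars along $\lambda_C$. Indeed $d_{TC}$ is a derivation by (d2), so its restriction $\lambda_C;d_{TC}$ along the algebra map $\lambda_C$ is a derivation by Remark~\ref{rem:derns_alg_maps_mon}; and $\lambda_C\otimes 1_C$ is $SC$-linear (a direct check from $\lambda_C$ preserving multiplication), so $d_{SC};(\lambda_C\otimes 1_C)$ is a derivation as the composite of the universal derivation $d_{SC}$ with a module map. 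Since $SC$ is free on $C$, two derivations out of $SC$ agree as soon as they agree after precomposition with $\eta^{S}_C\colon C\rarr SC$, and this is a one-line computation from (d3) for both $S$ and $T$ together with $\eta^{S}_C;\lambda_C=\eta^{T}_C$ and $e_{SC};\lambda_C=e_{TC}$: both composites restrict to $e_{TC}\otimes 1_C$ on generators.

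For the backward implication I would first dispatch the easy axioms: precomposing (a) with $\eta^{S}_C$ (resp.\ with $e_{SC}$) and invoking (d3) (resp.\ (d1)) for $S$, together with unit-preservation of $\lambda_C$, yields (d3) (resp.\ (d1)) for $T$ by direct calculation. The substantial point, and the step I expect to be the main obstacle, is the Leibniz rule (d2): condition (a) only constrains $d_{TC}$ along the image of $\lambda_C$, which need not be epimorphic (for instance the inclusion of the symmetric algebra into the divided power algebra in positive characteristic), so the Leibniz rule for $d_{SC}$ does \emph{not} transport to $d_{TC}$ by naivety alone. Both the Chain Rule and the reflexive-coequalizer hypothesis will be needed.

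To prove (d2) I would exploit the canonical presentation of $TC$ as a quotient of a free commutative algebra. Writing $\xi_C:=\lambda_{TC};\mu^{T}_C\colon S(TC)\rarr TC$ for the $S$-algebra structure carried by $TC$ (this is the algebra structure underlying the modality, by Theorem~\ref{T-Alg-to-Alg}), the map $\xi_C$ is a split, hence regular, epimorphism of algebras, and $\xi_C\otimes\xi_C$ is again epic since $\otimes$ preserves the relevant reflexive coequalizers. Consequently $d_{TC}$ is a derivation --- equivalently $\langle 1,d_{TC}\rangle\colon TC\rarr TC\oplus(TC\otimes C)$ is an algebra map --- as soon as the precomposite $\xi_C;\langle 1,d_{TC}\rangle=\langle \xi_C,\ \xi_C;d_{TC}\rangle$ is an algebra map, i.e.\ as soon as $\xi_C;d_{TC}\colon S(TC)\rarr (TC\otimes C)_{\xi_C}$ is a derivation (the relative form of the Beck correspondence of Remark~\ref{rem:derns_alg_maps_mon}). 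This is exactly where (a), now applied at the object $TC$, combines with the Chain Rule: rewriting $\mu^{T}_C;d_{TC}$ by (d4) and then replacing $\lambda_{TC};d_{T^2C}$ by $d_{S(TC)};(\lambda_{TC}\otimes 1)$ via (a) at $TC$, one computes
\[
\xi_C;d_{TC}=d_{S(TC)};\bigl((\xi_C\otimes d_{TC});(m_{TC}\otimes 1)\bigr).
\]
The factor $(\xi_C\otimes d_{TC});(m_{TC}\otimes 1)$ is $S(TC)$-linear (a direct check from $\xi_C$ being an algebra map), so $\xi_C;d_{TC}$ is the composite of the universal derivation $d_{S(TC)}$ with a module map, hence a derivation by Remark~\ref{rem:derns_alg_maps_mon}. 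Descending along the epimorphism $\xi_C$ then delivers the Leibniz rule (d2) for $d_{TC}$, completing the backward implication. Finally, the two passages are visibly mutually inverse, since each leaves the underlying data (1)--(3) untouched, which yields the asserted bijection of structures.
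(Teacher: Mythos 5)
Your proposal is correct, and its skeleton coincides with the paper's: both reduce the theorem to a comparison of axioms on the same underlying data via Proposition~\ref{thm:alg_modality_via_mnd_mor} and Theorem~\ref{thm:chn_rule_iff_d_dern}, and both prove the direction (codifferential $\Rightarrow$ (a)) by noting that the two composites in the square are derivations out of the free algebra $SC$ agreeing on generators. The one genuine divergence is your treatment of the Leibniz rule (d2) in the other direction. The paper proves a general Claim --- every $T$-derivation $\partial\colon A\rarr M$ is an $\SSS$-derivation, obtained by pasting square (a) onto the square of Definition~\ref{def:generalized_defn_tdern} --- and then invokes the unproved Remark~\ref{SDer=Der} to convert ``$\SSS$-derivation'' into ``ordinary derivation'', which yields (d1) and (d2) simultaneously. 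You instead specialize to $A=TC$, $\partial=d_{TC}$: your identity $\xi_C;d_{TC}=d_{S(TC)};(\xi_C\otimes d_{TC});(m_{TC}\otimes 1)$ is precisely the paper's Claim diagram in that instance, and your descent along $\xi_C=\lambda_{TC};\mu^T_C$ amounts to a direct proof of the relevant direction of Remark~\ref{SDer=Der}, making your argument more self-contained than the paper's. (Two small remarks: $\xi_C\otimes\xi_C$ is split epi simply because $\xi_C$ is split by $\eta^{\SSS}_{TC}$ in $\cC$, so the preservation of reflexive coequalizers by $\otimes$ is not needed at that particular step --- it is needed earlier, for the monadicity underlying Proposition~\ref{thm:alg_modality_via_mnd_mor} and Theorem~\ref{thm:smalg_mnd_codiff}; and your divided-power example correctly diagnoses why the Leibniz rule cannot be transported naively along $\lambda_C$, which is exactly the obstruction the paper's Claim is designed to circumvent.) Your derivation of (d1) and (d3) by precomposing (a) with $e_{SC}$ and $\eta^{\SSS}_C$ is an equivalent, slightly more direct version of the paper's periphery-diagram argument.
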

\begin{prf}{}

By Remark \ref{monadic},  the category of commutative algebras in $\cC$ is monadic over $\cC$ and so can be identified with the category of $\SSS$-algebras.  By Theorem \ref{thm:alg_modality_via_mnd_mor}, we know that algebra modalities on $\cC$ are in bijective correspondence with pairs $(T,\lambda)$ consisting of a monad $T$ on $\cC$ and a monad morphism $\lambda\colon\SSS \rightarrow T$.  Suppose we are given such a pair $(T,\lambda)$, together with a natural tranformation $d_{T(-)}$ satisfying (a) and (b).

\begin{quotation}
\noindent\textit{Claim:} Any $T$-derivation $\partial:A \rightarrow M$ is, in particular, an $\SSS$-derivation, equivalently by Remark \ref{SDer=Der}, a derivation in the ordinary sense (Definition \ref{def:ordinary_dern}).
\end{quotation}

To prove this claim, observe that the following diagram commutes, by (a) and Definition
\ref{def:generalized_defn_tdern}, where $a$ is the given $T$-algebra structure on $A$.
$$
\xymatrix{
SA \ar[d]_{d_{SA}} \ar[r]^{\lambda_A} & TA \ar[d]^{d_{TA}} \ar[rr]^a & & A \ar[d]^\partial \\
SA \otimes A \ar[r]_{\lambda_A \otimes 1} & TA \otimes A \ar[r]_{a \otimes \partial} & A \otimes M \ar[r]_\bullet & M
}
$$
But the upper row is the $\SSS$-algebra structure acquired by $A$ via Theorem \ref{thm:alg_modality_via_mnd_mor}, so by Definition \ref{def:generalized_defn_tdern} the Claim is proved.

We have assumed that $d$ satisfies the Chain Rule axiom, equivalently that each component $d_{TC}:TC \rightarrow TC \otimes C$ is a $T$-derivation (Theorem \ref{thm:chn_rule_iff_d_dern}), so by the Claim, $d_{TC}$ is an $\SSS$-derivation, equivalently, an ordinary derivation.  Hence the axioms (d1) and (d2) of Definition \ref{def:codiff_cat} hold, since together they assert exactly that each component $d_{TC}$ is an ordinary derivation.  We also know that axiom (d4) (the Chain Rule) holds, by assumption (b), so it suffices to prove that (d3) holds.  Indeed, (d3) asserts that the periphery of the following diagram commutes
\begin{equation}\label{eqn:linearity_diag}
\xymatrix{
C \ar[d]_(.4){\wr} \ar@/^3ex/[rr]^{\eta^T_C} \ar[r]_{\eta^\SSS_C} & SC \ar[d]^{d_{SC}} \ar[r]_{\lambda_C} & TC \ar[d]^{d_{TC}}\\
k\otimes C \ar[r]^{e \otimes 1} \ar@/_3ex/[rr]_{e \otimes 1} & SC \otimes C \ar[r]^{\lambda_C \otimes 1} & TC \otimes C
}
\end{equation}
where $\eta^T$ and $\eta^\SSS$ are the units of $T$ and $\SSS$, respectively.  The upper cell commutes since $\lambda$ is a monad morphism, and the lower cell commutes since $\lambda_C$ is an \SSS-homomorphism, i.e a homomorphism of algebras.  The leftmost cell commutes since $\cC$ is a codifferential category when equipped with $\SSS$ (\ref{thm:smalg_mnd_codiff}), and the rightmost cell commutes by (a).

Conversely, let us instead assume that $(\cC,T,d)$ is a codifferential category.  Then since axiom (d3) holds, the 
periphery of the diagram \eqref{eqn:linearity_diag} commutes, but we also know that the upper, lower, and leftmost 
cells in \eqref{eqn:linearity_diag} commute.  Hence, whereas our aim is to show that (a) holds, i.e., that the rightmost 
square in \eqref{eqn:linearity_diag} commutes, we know that this square `commutes when preceded by $\eta^
\SSS_C$'.  But by axioms (d1) and (d2), $d_{TC}$ is an ordinary derivation, equivalently, an $\SSS$-derivation 
(\ref{def:ordinary_dern}), so the composite $\lambda_C;d_{TC}$ is an $\SSS$-derivation since $\lambda_C$ is an 
algebra map.  Also, $d_{SC}$ is an $\SSS$-derivation, and one readily checks that $\lambda_C 
\otimes 1:SC \otimes C \rightarrow TC \otimes C$ is a morphism of $SC$-modules (where $TC \otimes C$ carries the 
$SC$-module structure that it acquires by restriction of scalars along the algebra homomorphism $\lambda_C$).  
Hence the composite $d_{SC};\lambda_C \otimes 1$ is an $\SSS$-derivation.  Therefore both composites in the square 
in question are $\SSS$-derivations and so are uniquely determined by their composites with $\eta^\SSS_C:C 
\rightarrow SC$, which are equal.
\end{prf}

An advantage of this definition is that it immediately paves the way for variations of the theory of differential 
categories and differential linear logic. For example, to obtain noncommutative variants, one can replace the 
symmetric algebra in the above construction with a different endofunctor.

\end{document}